\crefname{equation}{}{}
\newtheorem{theorem}{Theorem}[section]
\newtheorem{lemma}[theorem]{Lemma}
\newtheorem*{conjecture*}{Conjecture}
\theoremstyle{definition}
\theoremstyle{remark}
\newtheorem*{remark}{Remark}
\numberwithin{equation}{section}
\DeclarePairedDelimiter\abs{\lvert}{\rvert}
\newcommand{\R}{\mathbb R}
\newcommand{\N}{\mathbb N}
\newcommand{\re}{{\text {\rm Re}}}
\newcommand{\es}{\emptyset}
\newcommand{\ad}{a_{\Delta}}
 \def\H{\mathbb{H}}
\newcommand{\cA}{\mathcal A}
\newcommand{\cF}{\mathcal F}
\newcommand{\cP}{\mathcal P}
\newcommand{\C}{\mathbb C}
\newcommand{\SL}{\mathrm{SL}}
\newcommand{\Z}{\mathbb Z}
\renewcommand{\abs}[1]{\left\vert #1 \right \vert}
\begin{document}

\title[Zeros of Hecke polynomials arising from weak eigenforms]{Zeros of Hecke polynomials arising from weak eigenforms}

\thanks{2020 {\it{Mathematics Subject Classification.}} 11F30, 11F25}
\keywords{Fourier coefficients, Hecke operators, harmonic Maass forms, meromorphic modular forms}

\author{Kevin Gomez}
\address{Dept. of Mathematics, University of Virginia, Charlottesville, VA 22904}
\email{vhe4ht@virginia.edu}

\begin{abstract}
	We attach Hecke polynomials $P_n(F;x)$ to weak Hecke eigenforms $ F$ of weight $2-k$ and show that, for large $n$, every zero is simple and lies in $[0,1728]$. The construction pulls back a weakly holomorphic Hecke combination of $F$ along $j$; the analysis follows Hecke orbits on the unit-circle arc $\cA$, isolating a dominant “cosine” term and controlling the tail via Maass–Poincaré series and Whittaker/Bessel bounds. This extends the Rankin–Swinnerton-Dyer/Asai–Kaneko–Ninomiya picture from holomorphic forms to a broad class of harmonic Maass forms and yields a clean degree–monicity formula and simple criteria for zeros at $0$ and $1728$.
\end{abstract}
\maketitle
\noindent

\section{Introduction and Statement of Results}

For positive even integers $k$, the weight $k$ Eisenstein series $E_{k}(\tau)$ has Fourier expansion
$$
	E_{k}(\tau):= 1 -\frac{2k}{B_{k}}\sum_{n=1}^{\infty} \sigma_{k-1}(n)q^n,
$$
where $B_{k}$ is the $k^{{\text {\rm th}}}$ Bernoulli number, $\sigma_{\nu}(n):=\sum_{d\mid n}d^{\nu}$, and $q := e^{2\pi i \tau}$. If $k \geq 4$, then $E_k(\tau)$ is a weight $k$ holomorphic modular form on $\SL_2(\Z)$. Rankin and Swinnerton-Dyer \cite{RSD} showed that the zeros of these series lie on the lower boundary of the standard fundamental domain $\mathcal{F}$ for $\SL_2(\Z),$ the arc of the unit circle 
 \begin{equation} \label{A}
	\mathcal{A}:= \left \{ \tau \in \H \ : \ |\tau|=1\ \ {\text {\rm with}}\ \ -\frac{1}{2}\leq \re(\tau)\leq 0 \right\}.
\end{equation}
This result may be stated in terms of ``divisor polynomials'' of modular forms, with zeros in the interval $[0, 1728]$. Viewed in this lens, we observe similar phenomena in the zeros of polynomials arising from other forms on $\SL_2(\Z)$, including meromorphic Poincar\'e series \cite{Rankin} and images of the $j$-function under the Hecke action \cite{AKN}.

Here, we expand this picture to include a large class of harmonic Maass forms, generalizing work of Ono and the author \cite{GomezOno}. Namely, for weak Hecke eigenforms $F$ of weight $2-k$ with real Fourier coefficients, we define Hecke polynomials $P_n(F;x)$ and prove that, for large $n$, all of their zeros are simple and lie in $[0,1728]$. Our method expresses $F$ as a linear combination of Maass–Poincar\'e series and tracks Hecke action along the arc $\cA$.

To state these results, define
$$
\widetilde{E}_k(\tau):=\begin{cases}
 1 \ \ \ \ \ &{\text {\rm if}}\ k\equiv 0\pmod{12},\\
 E_{4}(z)^2E_6(z) \ \ \ \ \ &{\text {\rm if}}\ k\equiv 2\pmod{12},\\
 E_4(z)\ \ \ \ \ &{\text {\rm if}}\ k\equiv 4\pmod{12},\\
 E_6(z)\ \ \ \ \ &{\text {\rm if}}\ k\equiv 6\pmod{12},\\
 E_4(z)^2\ \ \ \ \ &{\text {\rm if}}\ k\equiv 8\pmod{12},\\
E_{4}(z)E_6(z) \ \ \ \ \ &{\text {\rm if}}\ k\equiv 10\pmod{12}.
\end{cases}
$$
We also recall the classical modular discriminant
$$
	\Delta(\tau)=\sum_{n=1}^{\infty} \tau(n)q^n:=\frac{E_4(\tau)^3-E_6(\tau)^2}{1728}=q-24q^2+252q^3-\cdots,
$$
and the modular $j$-invariant
$$
	j(\tau):=\frac{E_4(\tau)^3}{\Delta(\tau)}=q^{-1}+744+196884q+\cdots.
$$

We may then construct the Hecke polynomials $P_n(F;x)$.
\begin{theorem}\label{Theorem1}
	Suppose that $F(\tau) = \sum_{l=-m}^{\infty} c_F^+(l)q^l =  q^{-m} + O(q^{-m+1})$ is a weight $2 - k$ harmonic Maass weak eigenform with real Fourier coefficients with shadow $G(\tau) = \sum_{n=1}^{\infty} a(n)q^n$. Then the following are true.
	
	\noindent
	(1) For every $n \geq 2$, we have that
	$$
		H_n(F;\tau) := \Delta(\tau)^{b(k - 2)} \widetilde{E}_{k-2}(\tau) \cdot \left( n^{k-1} F(\tau) \ | \ T_{2-k}(n) - a(n)F(\tau) \right)
	$$
	is a weakly holomorphic function on $\SL_2(\Z)$, where
	$$
	b(k):=\begin{cases}
		 \lfloor k/12\rfloor \ \ \ \ \ &{\text {if}}\ k\not \equiv 2\pmod{12},\\
	           \lfloor k/12\rfloor -1 \ \ \ \ \ &{\text {if}}\ k\equiv 2\pmod{12}.
	\end{cases}
	$$
	
	\noindent
	(2) For every $n \geq 2$, there exists a monic polynomial $P_n(F;x) \in \R[x]$ of degree $mn - b(k - 2)$ for which
	$$
		P_n(F;j(\tau)) = H_n(F;\tau).
	$$
\end{theorem}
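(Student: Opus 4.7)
The plan is to introduce the intermediate function
\[
\Phi_n(\tau) := n^{k-1} F(\tau) \,|\, T_{2-k}(n) - a(n) F(\tau),
\]
show it is weakly holomorphic of weight $2-k$ on $\SL_2(\Z)$ (part (1)), and then identify $H_n = \Delta^{b(k-2)} \widetilde{E}_{k-2}\, \Phi_n$ as a weight-zero modular function holomorphic on $\H$ with at most a pole at the cusp, hence a polynomial in $j$ (part (2)). The first step is the substantive one; the rest is weight bookkeeping and $q$-expansion arithmetic.

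For weak holomorphicity, I would use the shadow operator $\xi_{2-k}$ and its Hecke compatibility
\[
\xi_{2-k}\bigl(F \,|\, T_{2-k}(n)\bigr) = n^{1-k}\,(\xi_{2-k} F) \,|\, T_k(n) = n^{1-k}\, G \,|\, T_k(n).
\]
Because $G$ is the shadow of a weak Hecke eigenform, the weak-eigenform hypothesis packages the condition $G \,|\, T_k(n) = a(n)\, G$, so $\xi_{2-k}(n^{k-1} F \,|\, T_{2-k}(n)) = a(n) G = \xi_{2-k}(a(n) F)$, giving $\xi_{2-k}(\Phi_n) = 0$. A weight $2-k$ harmonic Maass form with vanishing shadow has trivial non-holomorphic part, i.e.\ is weakly holomorphic, establishing part (1). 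For part (2), by construction $\widetilde{E}_{k-2}$ has weight $(k-2) - 12 b(k-2)$ and $\Delta^{b(k-2)}$ has weight $12 b(k-2)$, so $H_n$ has total weight $0$. Since $\Delta$ never vanishes on $\H$, $\widetilde{E}_{k-2}$ is holomorphic on $\H$, and $\Phi_n$ has no poles on $\H$ by part (1), the function $H_n$ is a modular function for $\SL_2(\Z)$ with at most a pole at the cusp, which forces $H_n = P_n(F; j(\tau))$ for some polynomial $P_n$.

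For the degree and monicity, the leading term of $n^{k-1} F \,|\, T_{2-k}(n)$ comes from $a = n,\ d = 1$ in the Hecke sum and equals $q^{-mn}$, while $a(n) F = O(q^{-m})$ is lower order. Combining with $\Delta^{b(k-2)} = q^{b(k-2)} + O(q^{b(k-2)+1})$ and $\widetilde{E}_{k-2} = 1 + O(q)$ gives $H_n = q^{-(mn - b(k-2))} + O(q^{-(mn - b(k-2)) + 1})$, and matching against $j(\tau) = q^{-1} + 744 + \cdots$ pins down $P_n$ as monic of degree $mn - b(k-2)$. Reality of the coefficients of $P_n$ follows because $F$, $G$, $\Delta$, $\widetilde{E}_{k-2}$, and $j$ all have real Fourier coefficients and Hecke operators preserve reality, so $H_n$ has real $q$-expansion and the change of variables via $j$ produces a real polynomial. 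The main obstacle is verifying the Hecke--shadow commutation with the normalization consistent with the paper's conventions; once the cancellation $\xi_{2-k}(\Phi_n) = 0$ is clean, everything else is routine weight and leading-term accounting.
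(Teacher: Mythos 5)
Your proposal is correct and follows essentially the same route as the paper: both use the Hecke--$\xi$ commutation relation together with the eigenform property of the shadow to kill the nonholomorphic part of $n^{k-1}F\,|\,T_{2-k}(n)-a(n)F$ (the paper phrases this as $F^-$ being a $T_{2-k}(n)$-eigenform with eigenvalue $n^{1-k}a(n)$, you as $\xi_{2-k}(\Phi_n)=0$, which is the same mechanism), and both then obtain monicity and the degree $mn-b(k-2)$ from the leading term $q^{-mn}$ of the Hecke sum combined with the $q$-expansions of $\Delta^{b(k-2)}$ and $\widetilde{E}_{k-2}$.
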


These polynomials, by their construction, have zeros naturally located in $[0, 1728]$ for sufficiently large $n$.
\begin{theorem} \label{Theorem2}
	Assuming the notations and hypotheses of Theorem~\ref{Theorem1}, if $n \geq 7$ satisfies
	$$
		C_F n^{k-1} e^{-\pi n\frac{\sqrt{3}}{2}} < 1,
	$$
	then $P_n(F;x)$ has $mn - b(k - 2)$ distinct zeros in $[0,1728]$, where
	$$
		C_F := \frac{1}{4}\max \left\{80m^{k-1}\sum_{l=1}^{m} \abs{c_F^+(-l)}, 1\right\}
	$$
	is a constant depending only on $F$.
\end{theorem}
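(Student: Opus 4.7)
The plan is to pull the root count of $P_n(F;x)$ back to the arc $\cA$. Since $j$ maps $\cA$ bijectively and monotonically onto $[0,1728]$ (with $j(i)=1728$ and $j(e^{2\pi i/3})=0$) and $P_n(F;x)\in\R[x]$ by Theorem~\ref{Theorem1}(2), the identity $H_n(F;\tau)=P_n(F;j(\tau))$ shows that $H_n(F;e^{i\theta})$ is real-valued for $\theta\in[\pi/2,2\pi/3]$ and that real zeros of $P_n(F;x)$ in $[0,1728]$ correspond bijectively to zeros of $H_n(F;\cdot)$ on $\cA$. Writing $N:=mn-b(k-2)$ for the degree, it suffices to produce $N$ sign changes of $\theta\mapsto H_n(F;e^{i\theta})$ on $[\pi/2,2\pi/3]$: the intermediate value theorem together with $\deg P_n=N$ then forces exactly $N$ distinct simple real roots in $(0,1728)$.

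\textbf{The cosine main term.} I would use the Fourier expansion $H_n(F;\tau)=\sum_{l\ge -N}c(l)q^l$, where $c(-N)=1$ by monicity and $c(l)\in\R$. On $\cA$, with $q=e^{-2\pi\sin\theta}e^{2\pi i\cos\theta}$, the reality of $H_n$ permits taking real parts termwise to obtain
\[
  H_n(F;e^{i\theta})=\sum_{l\ge -N}c(l)\,e^{-2\pi l\sin\theta}\cos(2\pi l\cos\theta).
\]
The leading contribution $M(\theta):=e^{2\pi N\sin\theta}\cos(2\pi N\cos\theta)$ is the sought ``dominant cosine'' term. As $\theta$ traverses $[\pi/2,2\pi/3]$ the phase $2\pi N\cos\theta$ sweeps monotonically through $[-\pi N,0]$, so $\cos(2\pi N\cos\theta)$ visits $N+1$ extrema at the sample points $\theta_j$ defined by $\cos\theta_j=-j/(2N)$ for $j=0,1,\ldots,N$, where $M(\theta_j)=(-1)^j e^{2\pi N\sin\theta_j}$ has alternating signs and magnitude at least $e^{\pi N\sqrt{3}}$, interlacing $N$ sign changes of $M$.

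\textbf{Tail bound via Maass--Poincar\'e series.} To bound the tail $T(\theta):=H_n(F;e^{i\theta})-M(\theta)$ at each $\theta_j$, I would expand $F$ in the canonical basis of weight-$(2-k)$ Maass--Poincar\'e series, $F=\sum_{l=1}^m c_F^+(-l)\,\cP_{2-k}(l;\tau)$, where each $\cP_{2-k}(l;\tau)$ has principal part $q^{-l}$ and non-principal Fourier coefficients given explicitly by Kloosterman sums against $I$-Bessel/Whittaker functions. A direct coset analysis of the Hecke sum then shows that the dominant contribution to $(F\mid T_{2-k}(n))(e^{i\theta})$ on $\cA$ comes from the coset $(a,b,d)=(n,0,1)$ and produces, after multiplication by $\Delta^{b(k-2)}\widetilde{E}_{k-2}(e^{i\theta})$ (whose phase absorbs precisely the shift $mn\mapsto N$), the leading Fourier term $q^{-N}$ responsible for $M(\theta)$. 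The residual contributions --- subleading principal-part terms of $F$ at $ne^{i\theta}$, the Hecke cosets with $d\ge 2$, the subtraction $-a(n)F(e^{i\theta})$, the shadow (non-holomorphic) piece of $F$, and the Kloosterman/Bessel tails of each $\cP_{2-k}(l;\tau)$ --- can each be bounded on $\cA$ using standard Bessel asymptotics ($|I_\nu(x)|\le e^x/\sqrt{2\pi x}$) together with Weil's bound on Kloosterman sums, yielding
\[
  |T(\theta_j)|\;\le\;C_F\,n^{k-1}\,e^{-\pi n\sqrt{3}/2}\cdot e^{2\pi N\sin\theta_j}
\]
at each extremal point; the numerical factor $80\,m^{k-1}\sum_{l=1}^m|c_F^+(-l)|$ in $C_F$ absorbs the Poincar\'e-expansion weights and slack from the triangle inequality, while $e^{-\pi n\sqrt{3}/2}$ is the exponential gap between the leading Fourier pole and the next-tier data on the arc.

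\textbf{Conclusion and main obstacle.} Under the hypothesis $C_F n^{k-1}e^{-\pi n\sqrt{3}/2}<1$, the above yields $|T(\theta_j)|<|M(\theta_j)|$ for every $j=0,1,\ldots,N$, whence $\sgn H_n(F;e^{i\theta_j})=(-1)^j$. Continuity and the intermediate value theorem then supply at least $N$ distinct zeros of $H_n(F;\cdot)$ in $(\pi/2,2\pi/3)$, equivalently at least $N$ distinct real zeros of $P_n(F;x)$ in $(0,1728)$; since $\deg P_n=N$, these exhaust the zeros and are all simple. The main obstacle will be paragraph three: the careful bookkeeping of numerical constants through the Poincar\'e/Kloosterman/Bessel machinery so that the residual tail collapses to exactly the advertised form, with the specific absolute constant $80$ and weight-dependent factor $m^{k-1}$ in $C_F$. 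The explicit $n\ge 7$ threshold is presumably where the uniform asymptotic regime for the Bessel functions first takes its cleanest usable form.
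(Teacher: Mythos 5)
Your overall strategy (pull the count back to the arc $\cA$, isolate a dominant cosine, control the rest via Maass--Poincar\'e series) is the paper's strategy, but the specific main term you choose is not the correct dominant behavior of $H_n(F;\cdot)$ on $\cA$, and this breaks the argument in three concrete places. First, the single Fourier term $M(\theta)=e^{2\pi N\sin\theta}\cos(2\pi N\cos\theta)$ is only ``half'' of the oscillation: the reflected coset $(c,d)=(1,0)$ of the relevant Poincar\'e series contributes a second term of the same exponential size, and in the Fourier picture that contribution is smeared across all the positive-index coefficients $c(l)$, which grow like $e^{4\pi\sqrt{Nl}}$; a termwise bound on $\sum_{l\geq 1}|c(l)|\,|q|^l$ on $\cA$ therefore exceeds $e^{2\pi N\sin\theta}$ and your decomposition ``$H_n=M+\text{(small tail)}$'' is false as stated. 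The paper avoids this by working at weight $2-k$ with $H_n^\ast$ (before multiplying by $\Delta^{b(k-2)}\widetilde{E}_{k-2}$), extracting \emph{both} cosets $(0,1)$ and $(1,0)$ of $\cF_{2-k,-mn}$, which yields the main term $f_{mn}(\theta)$ with phase $g_{mn}(\theta)=\tfrac{k-2}{2}\theta+2\pi mn\cos\theta$ and a Whittaker amplitude factor; the remaining cosets ($Q$-term) and the remaining Poincar\'e series ($R$-term) are what the Kloosterman/Bessel machinery actually bounds. Second, because the true phase is $g_{mn}$ and not $2\pi N\cos\theta$ (note $N=mn-b(k-2)\neq mn$ in general, and the $\tfrac{k-2}{2}\theta$ shift is of order several multiples of $\pi$), your sample points $\cos\theta_j=-j/(2N)$ sit near the \emph{zeros}, not the extrema, of the genuine main term for $\theta_j$ close to $\pi/2$; the signs of $H_n(F;e^{i\theta_j})$ need not alternate there, so the interlacing step fails even in the interior.

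Third, and most visibly, your claimed bound $|T(\theta_j)|<|M(\theta_j)|$ cannot hold at the endpoints: $\theta_0=\pi/2$ and $\theta_N=2\pi/3$ are the corners $\tau=i$ and $\tau=\rho$, where $\widetilde{E}_{k-2}$ (hence $H_n(F;\tau)$) vanishes for every $k\not\equiv 2\pmod{12}$, while $M(\theta_0)\neq 0$. The correct count is not ``$N$ interior sign changes'': the cosine argument only produces $mn+\lfloor(k-2)/6\rfloor-\lceil(k-2)/4\rceil$ sign changes of $H_n^\ast$ on the open arc, and the deficit $|Z_{k-2}|=N-\bigl(mn+\lfloor(k-2)/6\rfloor-\lceil(k-2)/4\rceil\bigr)$ is supplied by the trivial zeros of $\widetilde{E}_{k-2}$ at the corners --- this bookkeeping is exactly what produces the zeros at $x=0$ and $x=1728$ noted in the Remark, and it is absent from your proposal. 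To repair the argument you would need to (i) replace $M(\theta)$ by the two-coset main term $f_{mn}$ of $H_n^\ast$ with the phase $g_{mn}$, (ii) place the sample points at $g_{mn}^{-1}(\pi i)$, and (iii) add the corner zeros of $\widetilde{E}_{k-2}$ separately to reach the full degree $mn-b(k-2)$.
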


\begin{remark}
	Moreover, we have that the zeros of $P_n(F;x)$ include $0$ (resp. $1728$) for $n$ satisfying the hypotheses of Theorem~\ref{Theorem2} if $k \equiv 2,4 \pmod{6}$ (resp. $k \equiv 2 \pmod{4}$).
\end{remark}

This paper is organized as follows. In Section 2, we discuss previous work concerning zeros of forms on $\SL_2(\Z)$ and recall necessary background on harmonic Maass forms. We prove Theorem~\ref{Theorem1} in Section 3 using the theory of Hecke operators for harmonic Maass forms. Finally, in Section 4, we recall and prove useful facts about Maass-Poincar\'e series and use these to prove Theorem~\ref{Theorem2}.

\section*{Acknowledgements}

The author thanks Ken Ono for very helpful guidance and comments in the writing of this manuscript.

\section{Background and Notation}

Theorems~\ref{Theorem1} and~\ref{Theorem2} are motivated by an array of similar results concerning the zeros of forms on $\SL_2(\Z)$. We begin with the theorem of Rankin and Swinnerton-Dyer, which in this context may be stated as follows. Given a modular form $f(\tau)$ of weight $k$, define
$$
	\widetilde{P}(f; j(\tau)) = \frac{f(\tau)}{\Delta(\tau)^{b(k)}\widetilde{E}_k(\tau)}.
$$
Since $\widetilde{E}_k(\tau)$ captures exactly the trivial zeros of $f(\tau)$, $\widetilde{P}(f; j(\tau))$ is holomorphic on the upper half-plane. By dividing by the appropriate power of $\Delta(\tau)$, we have a weakly holomorphic modular function (i.e. its poles are supported at the cusp at $\infty$). Because the $j$-function is a bijection from $\cF$ to $\C$, it follows that $\widetilde{P}(f; j(\tau))$ is a polynomial in $j(\tau)$. After noting that $j(i)=1728$ and $j(\omega)=0$, and then setting
$$
h_k(x):=\begin{cases}
  1 \ \ \ \ \ &{\text {\rm if}}\ k\equiv 0\pmod{12},\\
      x^2(x-1728) \ \ \ \ \ &{\text {\rm if}}\ k\equiv 2\pmod{12},\\
           x \ \ \ \ \ &{\text {\rm if}}\ k\equiv 4\pmod{12},\\
         x-1728 \ \ \ \ \ &{\text {\rm if}}\ k\equiv 6\pmod{12},\\
         x^2 \ \ \ \ \ &{\text {\rm if}}\ k\equiv 8\pmod{12},\\
               x(x-1728) \ \ \ \ \ &{\text {\rm if}}\ k\equiv 10\pmod{12},
 \end{cases}
$$
\noindent
we obtain the {\it divisor polynomial} of $f(\tau)$  by
 \begin{equation*}
 P(f;x) = h_k(x)\cdot \widetilde{P}(f;x).
 \end{equation*}
Since $j : \mathcal{A}\mapsto [0, 1728]$, the theorem of Rankin and Swinnerton-Dyer asserts that the zeros of $F(E_{2k};x)$ are in the interval $[0, 1728]$. Proceeding similarly, the zeros of certain meromorphic Poincar\'e series may also be pinpointed to the arc $\cA$ \cite{Rankin}.

The same phenomenon was discovered by Asai, Kaneko, and Ninomiya \cite{AKN} in connection with the Hecke operators $T_0(n)$ acting on the modular $j$-function. To describe their result, we recall an important sequence of modular functions. Let $j_0(\tau):=1$, and for every positive integer $n$, and let $j_n(\tau)$ be the unique modular function
which is holomorphic on $\H$ whose $q$-expansion satisfies
$$
	j_n(\tau)=q^{-n}+\sum_{l=1}^{\infty}c_n(l)q^l.
$$
Each $j_n(\tau)$ is a monic degree $n$ integer polynomial $J_n(x)$ for $x=j(\tau)$. We refer to these expressions as ``Hecke  polynomials'', as they satisfy
$$
	J_n(j(\tau))=j_n(\tau)= n(j_1(\tau) \ | \ T_0(n)).
$$
Asai, Kaneko, and Ninomiya proved that the zeros of $J_n(x) \in \Z[x]$ are simple and lie in $[0, 1728]$. This was accomplished by showing that the $J_n(x)$'s are well-approximated by a damped cosine wave for all $n \geq 2$.

Recently, in further pursuit of this phenomenon, Ono and the author \cite{GomezOno} proved an analogous result for the sequence of Hecke polynomials arising from the special weight $-10$ \textit{harmonic Maass form}
$$
	R(\tau) := M_\Delta(\tau) + N_\Delta(\tau).
$$
Here $N_\Delta(\tau)$ is the nonholomorphic period integral function
$$
	N_{\Delta}(\tau) = (2\pi)^{11}\cdot 11i\cdot \beta_{\Delta}\int_{-\overline{\tau}}^{i \infty}
\frac{\overline{\Delta(-\overline{z})}}{(-i(z+\tau))^{-10}} \ dz
$$
arising from the modular discriminant, with $\beta_\Delta$ an explicit real constant, and $M_\Delta(\tau)$ is the mock modular form
\begin{equation*}
\begin{split}
M_{\Delta}(\tau)&=\sum_{n=-1}^{\infty}\ad(n)q^n=11!\cdot q^{-1}+\frac{24\cdot 11!}{B_{12}}-73562460235.684\dots q
-929026615019.113\dots q^2 - \cdots.
\end{split}
\end{equation*}
The coefficients $a_\Delta(\tau)$ are given by absolutely convergent infinite series
\begin{equation*}
\ad(n)=-2\pi\Gamma(12)n^{-\frac{11}{2}}\cdot
                      \sum_{c=1}^{\infty}\frac{K(-1,n,c)}{c}\cdot I_{11}\left(\frac{4\pi \sqrt{n}}{c}
                      \right),
\end{equation*}
where $I_{11}(x)$ is the usual $I_{11}$-Bessel function and $K(m,n,c)$ is the Kloosterman sum
\begin{equation} \label{Kloosterman}
K(m,n,c):=
\sum_{v(c)^{\times}} e\left(\frac{m\overline
v+nv}{c}\right),
\end{equation}
where $v$ runs over the primitive residue classes modulo $c$, and $\overline v$ is the multiplicative inverse of $v$. For background on harmonic Maass forms and their applications, the reader can see \cite{BOAnnals, BOInventiones, BOPNAS, BOR, Br, BF, BruinierOno, OnoCDM, OnoMock}.

As seen from their construction (see \cite{OnoMock}), $M_\Delta(\tau)$ and $N_\Delta(\tau)$ are intrinsically related to the modular discriminant. Indeed, the action of the weight $-10$ Hecke operator on $N_\Delta(\tau)$ is essentially the weight $12$ Hecke action on $\Delta(\tau)$, which we explicate concretely as follows.  Let $\tau := u + iv$. Recall that if $F(\tau)$ is a harmonic Maass form of even weight $2 - k$, then it has a Fourier expansion of the form (for example, see Lemma 4.2 of \cite{HMF})
$$
	F(\tau) = \sum_{n \gg -\infty} c_F^+(n)q^n + \sum_{n < 0} c_F^-(n) \gamma(k - 1, -4\pi nv)q^n,
$$
where
$$
	\Gamma(s,z) := \int_z^\infty e^{-t} t^s \frac{dt}{t}
$$
is the incomplete gamma function. The $q$-series
$$
	F^+(\tau) := \sum_{n \gg -\infty} c_F^+(n)q^n
$$
is the \textit{holomorphic part} of $F(\tau)$, while its \textit{nonholomorphic part} is
$$
	F^-(\tau) := \sum_{n < 0} c_F^-(n) \gamma(k - 1, -4\pi nv)q^n.
$$
In the case of $R(\tau)$, we have that $F^+(\tau) = M_\Delta(\tau)$ and $F^-(\tau) = N_\Delta(\tau)$. We also recall the differential operator
$$
	\xi_k := 2iv^k \overline{\frac{\partial}{\partial \overline{\tau}}}.
$$

Following from the work Bruinier and Funke (see Proposition 3.2 of \cite{BF} and Theorem 5.9 of \cite{HMF}), the $\xi$-operator captures succinctly how we may relate a given cusp form to a corresponding infinite family of harmonic Maass forms. If $F(\tau)$ is a harmonic Maass form of weight $2 - k$, then $\xi_{2-k}(F(\tau))$ is a holomorphic cusp form of weight $k$, which we refer to as the \textit{shadow} of $F^+(\tau)$. Its Fourier expansion is given explicitly by (for example, see Theorem 5.9 of \cite{HMF})
\begin{equation} \label{HeckeXi}
	\xi_{2-k}(F(\tau)) = \xi_{2-k}(F^-(\tau)) = -(4\pi)^{k-1} \sum_{n=1}^{\infty} \overline{c_F^-(-n)}n^{k-1}q^n.
\end{equation}
For the case of $ R(\tau)$, we have that $\xi_{-10}(R(\tau)) = -11\beta_{\Delta} \cdot \Delta(\tau)$. Using the commutation relation (see Lemma 2.2 of \cite{GomezOno})
$$
	n^{k-1}\xi_{2-k}(F(\tau) \ | \ T_{2-k}(n)) = \xi_{2-k}(F(\tau)) \ | \ T_k(n),
$$
Ono and the author are able to construct a weight zero form with vanishing nonholomorphic part arising from $M_{\Delta}(\tau)$, and thus obtain a sequence of polynomials $\{P_n(R;x) \colon n \geq 2\}$ given by the relation

$$
	P_n(R;j(\tau)) = \frac{E_4(\tau)E_6(\tau)}{11!} \cdot \left(n^{11} M_\Delta(\tau) \ | \ T_{-10}(n) - \tau(n)M_\Delta(\tau)\right).
$$
For all $n \geq 2$, the zeros of these polynomials are simple and are in $[0,1728]$.

The case of $R(\tau)$ extends readily to all \textit{weak Hecke eigenforms} $F$; that is, weight $2-k$ harmonic Maass forms with real Fourier coefficients whose shadows are weight $k$ Hecke eigenforms. We mirror the methods in \cite{GomezOno} to show that $H_n(F;\tau)$ is a weight zero form with vanishing nonholomorphic part, and that the Hecke action along $\cA$ may be analyzed explicitly by realizing $H_n(F;\tau)$ as a linear combination of Maass-Poincar\'e series.

\section{Proof of Theorem~\ref{Theorem1}}

We now have the necessary background to prove Theorem~\ref{Theorem1}.

\begin{proof}[Proof of Theorem~\ref{Theorem1}]
Since $F$ is a weak Hecke eigenform, we have that, for each $n \geq 2$, $G = \xi_{2-k}(F)$ is an eigenform of $T_k(n)$ with eigenvalue $a(n)$. Via \eqref{HeckeXi}, we find that
$$
	n^{k-1}\xi_{2-k}(F(\tau) \ | \ T_{2-k}(n)) = \xi_{2-k}(F(\tau)) \ | \ T_k(n) = a(n)G(\tau) = a(n)\xi_{2-k}(F(\tau)).
$$
Thus, $F^-(\tau)$ is an eigenform of $T_{2-k}(n)$ with eigenvalue $n^{1-k}a(n)$. This implies that
$$
	n^{k-1}F^-(\tau) \ | \ T_{2-k}(n) - a(n)F^-(\tau) = 0.
$$
Therefore, we have that
$$
	n^{k-1}F(\tau) \ | \ T_{2-k}(n) - a(n)F(\tau) = n^{k-1}F^+(\tau) \ | \ T_{2-k}(n) - a(n)F^+(\tau)
$$
is a weakly holomorphic modular form on $\SL_2(\Z)$. We multiply by sufficient powers of $\Delta(\tau)$ and the form $\widetilde{E}_{k-2}(\tau)$ to obtain
$$
	H_n(F;\tau) =  \Delta(\tau)^{b(k-2)} \widetilde{E}_{k-2}(\tau) \cdot \left(n^{k-1}F^+(\tau) \ | \ T_{2-k}(n) - a(n)F^+(\tau)\right),
$$
which is a weakly holomorphic modular function since $F(\tau)$ is of weight $2 - k$. This proves claim (1).

To show (2), we expand $n^{k-1}F(\tau) \ | \ T_{2-k}(n)$ (see e.g. Proposition 2.1 of \cite{GomezOno}) as
$$
	n^{k-1}F(\tau) \ | \ T_{2-k}(n) = n^{k-1} \sum_{n' \leq 0} \sum_{\substack{d \mid (n,n') \\ d > 0}} d^{1-k} c_F\left( \frac{nn'}{d^2}\right)q^{n'} + O(q).
$$
Since $c_F(nn'/d^2) = 0$ whenever $nn'/d^2 < -m $, the principal part of $H_n(F;\tau)$ only contains terms for which $nn' \geq -m  d^2$. In particular, since $d \leq n$, we find that
$$
	n^{k-1}F(\tau) \ | \ T_{2-k}(n) = q^{-mn} + n^{k-1} \sum_{-mn < n' \leq 0} \sum_{\substack{d \mid (n,n') \\ d > 0}} d^{1-k} c_F\left( \frac{nn'}{d^2}\right)q^{n'} + O(q).
$$
As $F(\tau) = q^{-m} + O(q^{-m+1})$, we see that $n^{k-1} F(\tau) \ | \ T_{2-k}(n) = q^{-mn} + O(q^{-mn+1})$. Multiplying by $\widetilde{E}_{k-2}(\tau) = 1 + O(q)$ and $\Delta^{b(k-2)}(\tau) = q^{b(k-2)} + O(q^{b(k-2)+1})$, claim (1) grants that $H_n(F;\tau)$ is a monic polynomial in $j(\tau)$ of degree $mn - b(k - 2)$, proving (2).
\end{proof}

\section{Proof of Theorem~\ref{Theorem2}}
	
The proof of Theorem~\ref{Theorem2} relies on two infinite sequences of Poincar\'e series (see \cite{HMF}, \cite{OnoCBMS}). Given a positive integer $l$, we first require the classical Poincar\'e series of exponential type given by
\begin{equation}\label{PoincareDefinition}
	\cP_{k,l}(\tau) := \sum_{M \in \Gamma_{\infty} \backslash \SL_2(\Z)} (\varphi_l \ |_k \ M)(\tau),
\end{equation}
where $\Gamma_{\infty} := \{\pm \left(\begin{smallmatrix}1 & n \\ 0 & 1\end{smallmatrix}\right), n \in \Z\}$ is the group of translations and $\varphi_l(\tau) := q^l$.

We then require the Maass-Poincar\'e series
\begin{equation}\label{MaassPoincareDefinition}
	\cF_{2-k,-l}(\tau) := \sum_{M \in \Gamma_{\infty} \backslash \SL_2(\Z)} (\phi_{-l} \ |_{2-k} \ M)(\tau), 
\end{equation}
where
$$
	\phi_{-l}(\tau) := \frac{(4 \pi l v)^{\frac{k-2}{2}}}{\Gamma(k)}  M_{\frac{k-2}{2},\frac{k-1}{2}}(4\pi l v) e^{-2\pi i l u},
$$
and $M_{\kappa,\mu}$ is the usual $M$-Whittaker function. Here we recall the basic properties of these Poincar\'e series and how they relate to one another, which includes their Fourier expansions; these are given in terms of $I$-Bessel and $J$-Bessel functions and the Kloosterman sums defined in \eqref{Kloosterman}.

\begin{lemma}[Theorem 6.10 of \cite{HMF}] \label{MaassPoincare}
	Assuming the hypotheses above, the following are true.

	\noindent
	(1) We have
	$$
		\xi_{2-k}(\cF_{2-k,-l}) = (-1)^{k+1} \frac{(4\pi l)^{k-1}}{\Gamma(k-1)} \cP_{k,l}.
	$$
	
	\noindent
	(2) We have that $\cF_{2-k,-l}(\tau)$ is a weight $2-k$ harmonic Maass form on $\SL_2(\Z)$, with
	$$
	\cF_{2-k,-l}(\tau) = 	\cF_{2-k,-l}^+(\tau) + 	\cF_{2-k,-l}^-(\tau),
	$$
	where $\cF_{2-k,-l}^+(\tau)$ (resp. $\cF_{2-k,-l}^-(\tau)$) is its holomorphic part (resp. nonholomorphic part).
	
	\smallskip
	\noindent
	(3)	 The holomorphic part of $\cF_{2-k,-l}(\tau)$ has Fourier expansion
	$$
	\cF_{2-k,-l}^+(\tau)= q^{-l} - \frac{(2\pi i)^k l^{k-1}}{\Gamma(k)}\sum_{c > 0} \frac{K_{2-k}(l,0;c)}{c^k} + \sum_{n=1}^{\infty} c^+_{2-k,-l}(n)q^n,
	$$
	where for positive integers $n$ we have
	$$
		c_{2-k,-l}^+(n) = 2\pi i^{2-k} \left(\frac{l}{n}\right)^{\frac{k-1}{2}}  \sum_{c > 0} \frac{K_{2-k}(l,n;c)}{c} \cdot I_{k-1}\left(\frac{4\pi\sqrt{ln}}{c}\right).
	$$	
	
	\smallskip
	\noindent
	(4) The nonholomorphic part of $\cF_{2-k,-l}(\tau)$ has Fourier expansion
	$$
	\cF_{2-k,-l}^-(\tau) = \sum_{n=1}^{\infty} c_{2-k,-l}^-(-n)\Gamma(1 - k, 4\pi n v)q^{-n},	
	$$
	where for positive integers $n$ we have
	$$
		c_{2-k,-l}^-(-n) = 2\pi i^{2-k} \left(\frac{l}{n}\right)^{\frac{k-1}{2}}  \sum_{c > 0} \frac{K_{2-k}(l,-n;c)}{c} \cdot J_{k-1}\left(\frac{4\pi\sqrt{l n}}{c}\right).
	$$
\end{lemma}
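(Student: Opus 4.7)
The plan is to derive all four claims from a single unfolding computation of the Poincaré averages \eqref{PoincareDefinition} and \eqref{MaassPoincareDefinition}, using the Bruhat decomposition
$$
\SL_2(\Z) = \Gamma_\infty \ \cup\ \bigcup_{c\ge 1}\ \bigcup_{\substack{d\bmod c\\(d,c)=1}} \Gamma_\infty \begin{pmatrix}*&*\\c&d\end{pmatrix}.
$$
First, I would verify absolute convergence of $\cF_{2-k,-l}(\tau)$ on $\H$. The seed $\phi_{-l}(\tau)$ is bounded by $v^{(k-2)/2}$ times an entire function of $v$, and the standard Hecke majorant $\sum_{(c,d)=1}|c\tau+d|^{-\sigma}$ converges for $\sigma>2$; since the weight is $2-k$ with $k\ge 2$, this yields absolute and locally uniform convergence. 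With convergence in hand, modularity in weight $2-k$ is automatic from the averaging, and harmonicity $\Delta_{2-k}\cF_{2-k,-l}=0$ follows from the fact that $\phi_{-l}$ is annihilated by the weight-$(2-k)$ hyperbolic Laplacian (the $M$-Whittaker function $M_{(k-2)/2,(k-1)/2}(4\pi lv)e^{-2\pi ilu}$ is precisely chosen so that $\Delta_{2-k}\phi_{-l}=0$). This proves (2).

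For (3) and (4), I would unfold by separating the identity coset (which contributes $\phi_{-l}(\tau)$) from the $c\ge 1$ cosets. On each $c$-coset, the sum over $d\bmod c$ of $\phi_{-l}\!\mid_{2-k}\!M$ reassembles into a Kloosterman sum $K_{2-k}(l,n;c)$ upon computing the $n$th Fourier coefficient; the key integral
$$
\int_{\R} \phi_{-l}\!\Bigl(\frac{-1}{c^2\tau}\Bigr)(c\tau)^{k-2} e^{-2\pi i n u}\,du
$$
is a classical Whittaker transform. Using the integral representation of $M_{\kappa,\mu}$ and the asymptotic splitting of the $M$-Whittaker function into exponentially growing and decaying pieces, this integral produces (a) for $n>0$ the $I$-Bessel term that builds $c_{2-k,-l}^+(n)$, (b) for $n<0$ the $J$-Bessel term that, together with the incomplete gamma factor coming from the growing Whittaker piece, yields $c_{2-k,-l}^-(-n)\Gamma(1-k,4\pi |n|v)q^{n}$, and (c) for $n=0$ the constant term $-(2\pi i)^k l^{k-1}\Gamma(k)^{-1}\sum_c K_{2-k}(l,0;c)c^{-k}$. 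Collecting holomorphic versus non-holomorphic contributions gives (3) and (4). The same Bruhat unfolding applied to $\cP_{k,l}$ gives the standard Petersson formula.

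For (1), I would apply $\xi_{2-k}$ termwise. Since $\xi_{2-k}$ intertwines the weight-$(2-k)$ slash action with the weight-$k$ action (it satisfies $\xi_{2-k}(F\!\mid_{2-k}\! M)=\xi_{2-k}(F)\!\mid_k\! M$) and annihilates holomorphic functions, it acts only on the $M$-Whittaker factor of $\phi_{-l}$. A direct computation from $\xi_{2-k}=2iv^{2-k}\overline{\partial_{\bar\tau}}$ using the Kummer identity $M_{(k-2)/2,(k-1)/2}(x)=e^{-x/2}x^{(k-1)/2}\cdot(\text{const})$ after differentiation collapses $\xi_{2-k}(\phi_{-l})$ to the exponential seed $(-1)^{k+1}(4\pi l)^{k-1}\Gamma(k-1)^{-1}\varphi_l$; averaging then gives (1).

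The main obstacle will be the Whittaker integral in the unfolding step: keeping track of phase factors $i^{2-k}$, correctly distinguishing the $n>0$ and $n<0$ regimes so that $I_{k-1}$ and $J_{k-1}$ emerge with the right normalization, and matching the non-holomorphic $\Gamma(1-k,4\pi nv)$ factor against the growing half of the $M$-Whittaker asymptotic. Once this computation is handled cleanly (following the $\mathrm{GL}_2$ Whittaker table), all four assertions fall out and align with Theorem 6.10 of \cite{HMF}.
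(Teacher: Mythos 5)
The paper offers no proof of this lemma: it is imported verbatim as Theorem 6.10 of \cite{HMF}, so the only benchmark is the proof in that reference. Your plan follows exactly that route --- unfolding the Poincar\'e average over $\Gamma_\infty\backslash\SL_2(\Z)$, evaluating the resulting Whittaker integrals to produce Kloosterman sums with $I$- and $J$-Bessel weights, and applying $\xi_{2-k}$ termwise for part (1) --- and in outline it is the right argument.

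One step as written would not survive scrutiny: the convergence claim in (2). The weight-$(2-k)$ slash factor is $(c\tau+d)^{k-2}$, which \emph{grows} in $|c\tau+d|$, so bounding $\phi_{-l}$ by ``$v^{(k-2)/2}$ times an entire function of $v$'' and citing convergence of $\sum_{(c,d)=1}|c\tau+d|^{-\sigma}$ for $\sigma>2$ does not close the estimate: that bookkeeping leaves each coset contributing a term of size about $v^{(k-2)/2}$ with no decay in $(c,d)$. What actually saves the series is the small-argument asymptotic $M_{\kappa,\mu}(x)\sim x^{\mu+1/2}$ as $x\to 0$, which gives $\phi_{-l}(\tau)=O(v^{k-1})$ as $v\to 0$; since $\Im(M\tau)=v/|c\tau+d|^{2}$, each term is then $O\bigl(v^{k-1}|c\tau+d|^{-k}\bigr)$ and the series converges absolutely and locally uniformly precisely for $k>2$ (which suffices here, as the shadow is a nonzero cusp form so $k\ge 12$). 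A smaller point: the ``Kummer identity'' you invoke for (1) is not an identity for $M_{(k-2)/2,(k-1)/2}(x)$ itself --- compare \eqref{MSimp}, which exhibits it as a combination of growing and decaying exponentials --- rather, the collapse of $\xi_{2-k}(\phi_{-l})$ to a constant multiple of $q^{l}$ follows from the Whittaker differential recurrences after applying $2iv^{2-k}\,\overline{\partial/\partial\overline{\tau}}$. Both issues are repairable, and the remainder of the plan matches the cited proof.
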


Using these Poincar\'e series, we are able to construct any cusp form or harmonic Maass form.

\begin{lemma} \label{PoincareBasis}
	Assuming the notation and hypotheses above, the following are true.

	\noindent
	(1) Every cusp form of weight $k$ is a linear combination of Poincar\'e series $\cP_{k,l}(\tau)$ with $l \geq 1$.
	
	\noindent
	(2) Every harmonic Maass form of weight $2-k$ is a linear combination of Poincar\'e series $\cF_{2-k,-l}(\tau)$ with $l \geq 1$. In particular, if $F(\tau) = \sum_{l=-m}^{\infty} c_F^+(l)q^l$, then
	$$
		F(\tau) = \sum_{l=1}^{m} c_F^+(-l) \cF_{2-k,-l}(\tau).
	$$
\end{lemma}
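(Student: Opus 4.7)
My plan is to treat the two parts separately: part (1) by a standard Petersson unfolding, and part (2) by reducing to a uniqueness statement for harmonic Maass forms via the Bruinier--Funke pairing.

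For (1), I would work with the Petersson inner product. Unfolding the sum in the definition of $\cP_{k,l}$ against a cusp form $f = \sum_{n\geq 1} a_f(n) q^n \in S_k$ collapses the integral over $\SL_2(\Z)\backslash \H$ to an integral over $\Gamma_\infty\backslash \H$ against $\overline{q^l}v^{k-2}$, yielding the classical identity
$$\langle f, \cP_{k,l}\rangle_{\mathrm{Pet}} = \frac{\Gamma(k-1)}{(4\pi l)^{k-1}}\, a_f(l).$$
Hence any $f \in S_k$ orthogonal to every $\cP_{k,l}$ has all Fourier coefficients vanishing, so is zero. Since $S_k$ is finite-dimensional, the span of $\{\cP_{k,l}\}_{l\geq 1}$ equals $S_k$.

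For (2), I would form the difference $G(\tau) := F(\tau) - \sum_{l=1}^m c_F^+(-l) \cF_{2-k,-l}(\tau)$. By Lemma~\ref{MaassPoincare}(3) the principal part of $\cF_{2-k,-l}^+$ is precisely $q^{-l}$, so $G$ is a harmonic Maass form of weight $2-k$ whose holomorphic principal part vanishes. The heart of the argument is to show $G \equiv 0$. For this I would invoke the Bruinier--Funke pairing, which for any $g \in S_k$ gives an identity of the form
$$\langle g, \xi_{2-k}(G)\rangle_{\mathrm{Pet}} = c_k \sum_{n > 0} a_g(n)\, c_G^+(-n)$$
with $c_k \neq 0$. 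Since $G$ has trivial principal part, the right side vanishes for every $g \in S_k$; by non-degeneracy of the Petersson pairing on $S_k$, this forces $\xi_{2-k}(G) = 0$. Hence $G$ is weakly holomorphic, and having trivial principal part it is holomorphic at $\infty$, i.e., a holomorphic modular form of weight $2-k \leq 0$ on $\SL_2(\Z)$. For $k > 2$ this forces $G \equiv 0$; the $k = 2$ case reduces to matching constant terms, which follows from the Kloosterman-sum constant already recorded in Lemma~\ref{MaassPoincare}(3).

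The principal obstacle is the uniqueness step in (2): a priori a harmonic Maass form carries independent data in its principal part and in its shadow, so the claim that the principal part alone determines the form requires a genuine argument. The Bruinier--Funke pairing, which bridges principal-part data with the Petersson inner product of shadows, is the essential ingredient and does not follow formally. Part (1), by contrast, is routine once the unfolding is set up, and I would present it largely as a citation of the standard computation.
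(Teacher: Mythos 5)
Your proof is correct and follows essentially the same route as the paper: part (1) is the standard Petersson unfolding result (which the paper simply cites as Theorem 6.7(iii) of \cite{HMF}), and part (2) forms the same difference with cancelling principal part and concludes it vanishes because no nonzero holomorphic forms of negative weight exist on $\SL_2(\Z)$. The only difference is that where the paper cites Lemma 5.12 of \cite{HMF} for the uniqueness step, you supply its proof via the Bruinier--Funke pairing; that is a correct expansion of the citation, not a different argument.
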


\begin{proof}
	Claim (1) is a standard result in the theory of modular forms (see, for example, Theorem 6.7(iii) of \cite{HMF}). To prove (ii), we observe that
	$$
		F^+(\tau) - \sum_{l=1}^{m} c_F^+(-l)\cF_{2-k,-l}(\tau) = O(1)
	$$
	by Lemma~\ref{MaassPoincare} (3). By Lemma 5.12 of \cite{HMF}, this implies that $F^+(\tau) - \sum_{l=1}^{m} c_F(-m)\cF_{2-k,-l}(\tau)$ is either identically zero or a weight $2-k$ holomorphic form. Since no such forms exist, we obtain claim (2).
\end{proof}

We now describe the Hecke action on these Poincar\'e series.
\begin{lemma}
	Assuming the notation and hypotheses above, we have, for all $n \geq 1$,
	$$
		\cF_{2-k,-l}(\tau) \ | \ T_{2-k}(n) = \sum_{d \mid (l,n)} \left( \frac{n}{d} \right)^{1-k} \cF_{2-k,-\frac{ln}{d^2}}(\tau).
	$$
\end{lemma}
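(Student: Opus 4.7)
The plan is to identify both sides as weight $2-k$ harmonic Maass forms on $\SL_2(\Z)$ with matching principal parts at the cusp $\infty$, then conclude by the uniqueness argument already used in the proof of Lemma~\ref{PoincareBasis}(2). First I would observe that the Hecke operator $T_{2-k}(n)$ preserves the space of weight $2-k$ harmonic Maass forms on $\SL_2(\Z)$, so $\cF_{2-k,-l} \ | \ T_{2-k}(n)$ lies in the same ambient space as the proposed right-hand side.

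To compute the principal part of the left-hand side, I would invoke the Fourier-coefficient formula for $T_{2-k}(n)$ that was recalled in the proof of Theorem~\ref{Theorem1} (Proposition~2.1 of \cite{GomezOno}): for $n' < 0$, the coefficient of $q^{n'}$ in $F \ | \ T_{2-k}(n)$ equals $\sum_{d \mid \gcd(n,n')} d^{1-k}\, c_F^+(nn'/d^2)$. Since by Lemma~\ref{MaassPoincare}(3) the holomorphic principal part of $\cF_{2-k,-l}$ is the single monomial $q^{-l}$, only pairs $(n',d)$ with $nn'/d^2 = -l$ contribute. Reparametrizing by $e := n/d$ converts the divisibility conditions $d \mid n$ and $d \mid n'$ into the single condition $e \mid \gcd(l,n)$, with $n' = -ln/e^2$, yielding the principal part
$$
\sum_{e \mid \gcd(l,n)} (n/e)^{1-k}\, q^{-ln/e^2}.
$$
By Lemma~\ref{MaassPoincare}(3) again, each $\cF_{2-k,-m}$ has principal part $q^{-m}$, so the right-hand side $\sum_{d \mid \gcd(l,n)} (n/d)^{1-k}\,\cF_{2-k,-ln/d^2}$ has exactly this same principal part.

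The difference of the two sides is therefore a weight $2-k$ harmonic Maass form on $\SL_2(\Z)$ whose holomorphic part has no polar terms. By the uniqueness argument used in the proof of Lemma~\ref{PoincareBasis}(2) (applying Lemma~5.12 of \cite{HMF}), such a form is either identically zero or a holomorphic modular form of weight $2-k$, and no nonzero such form exists on $\SL_2(\Z)$ for $2-k \le 0$. The main subtlety I expect is the residual constant-term ambiguity in this uniqueness step (most notably in the edge case $k=2$, where the weight-zero space contains constants); this can be handled either by a direct comparison of constant terms using the Kloosterman-sum formula in Lemma~\ref{MaassPoincare}(3) together with standard divisor-sum manipulations, or by replacing the uniqueness step altogether with a direct unfolding. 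In that alternative approach the key ingredients would be the explicit Whittaker-level identity $\phi_{-l}\!\left(\frac{a\tau+b}{d}\right) = e^{-2\pi i l b/d}\,\phi_{-la/d}(\tau)$ obtained from the transformation $v \mapsto av/d$, $u \mapsto (au+b)/d$, followed by the orthogonality $\sum_{b \pmod d} e^{-2\pi i l b/d} = d\cdot \mathbf{1}[d \mid l]$, which together with $d \mid n$ forces $d \mid \gcd(l,n)$ and recovers the claimed sum after careful bookkeeping of the slash-operator normalizations.
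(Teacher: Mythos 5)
Your argument is correct, but it takes a genuinely different route from the paper's one-line proof. The paper reduces the statement to the classical Hecke equivariance of the holomorphic Poincar\'e series, $\cP_{k,l} \ | \ T_{k}(n) = \sum_{d \mid (l,n)} (n/d)^{k-1} \cP_{k,\frac{ln}{d^2}}$: applying $\xi_{2-k}$ and using Lemma~\ref{MaassPoincare}~(1) together with the $\xi$--Hecke commutation relation shows that both sides of the desired identity have the same shadow, so their difference is weakly holomorphic, and Proposition~2.1 of \cite{GomezOno} then matches principal parts and forces the difference to vanish. You bypass the shadow computation entirely: you match principal parts directly --- your reparametrization $e = n/d$, converting the conditions $d \mid (n,n')$ and $nn'/d^2 = -l$ into $e \mid (l,n)$ with $n' = -ln/e^2$ and weight $(n/e)^{1-k}$, is a correct bijection --- and then invoke Lemma~5.12 of \cite{HMF}, which already subsumes the vanishing of the shadow for a harmonic Maass form with trivial principal part. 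This is exactly the uniqueness device the paper itself deploys in the proof of Lemma~\ref{PoincareBasis}~(2), so your version is arguably more self-contained (it never needs the classical Poincar\'e identity), at the cost of leaning on the Bruinier--Funke pairing hidden inside Lemma~5.12. Your concern about the residual constant term and the $k=2$ edge case is reasonable in the abstract but moot here: the series $\cF_{2-k,-l}$ only arise for $k>2$ (indeed $k \ge 12$ once $F$ is a weak Hecke eigenform on $\SL_2(\Z)$), so the weight $2-k$ is strictly negative and no nonzero holomorphic forms, constants included, survive the final step.
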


\begin{proof}
	This follows readily from the application of Lemma~\ref{MaassPoincare} (1) and Proposition 2.1 of \cite{GomezOno} to the identity
	$$
		\cP_{k,l}(\tau) \ | \ T_{k}(n) = \sum_{d \mid (l,n)} \left( \frac{n}{d} \right)^{k-1} \cP_{k,\frac{ln}{d^2}}(\tau).
	$$
\end{proof}

Thanks to these lemmas, the proof of Theorem~\ref{Theorem2} is reduced to the study of the zeros of these Poincar\'e series.  For the sequel, we let $\tau = e^{i\theta},$ where $\theta \in [\frac{\pi}{3},\frac{\pi}{2}],$ so that $\tau \in \cA$. Let $F$ be a weak Hecke eigenform of weight $2-k$ with real Fourier coefficients, which we may write by Lemma~\ref{PoincareBasis} as
$$
	F(\tau) = \sum_{l=1}^{m} c_F^+(-l) \cF_{2-k,-l}(\tau).
$$
We have
$$
	F(\tau) \ | \ T_{2-k}(n) = \sum_{l=1}^{m} c_F^+(-l) \sum_{d \mid (l,n)} \left( \frac{n}{d} \right)^{1-k} \cF_{2-k,-\frac{ln}{d^2}}(\tau),
$$
and thus
\begin{equation} \label{FullExpansion}
	H_n^\ast(F;\tau) = \sum_{l=1}^{m}  c_F^+(-l) \left(-a(n) \cF_{2-k,-l}(\tau) + \sum_{d \mid (l,n)} d^{k-1} \cF_{2-k,-\frac{ln}{d^2}}(\tau)\right),
\end{equation}
where $\Delta(\tau)^{b(k-2)}\widetilde{E}_{k-2}(\tau)H_n^\ast(F;\tau) = H_n(F;\tau)$.

We aim to show that $H_n^\ast(F;\tau)$ is well-approximated by a damped cosine wave which controls the location of its zeros, and hence the location of the zeros of $H_n(F;\tau)$. Indeed,  if $f$ is a weakly holomorphic form of weight $2-k$ with real Fourier coefficients, then $f(-1/e^{i\theta}) = \overline{f(e^{i\theta})}$, wherein
$$
	\overline{e^{\frac{2-k}{2}i\theta}f(e^{i\theta})} = 	e^{-\frac{2-k}{2}i\theta}\overline{f(e^{i\theta})} = e^{-\frac{2-k}{2}i\theta}f(-1/e^{i\theta}) = e^{\frac{2-k}{2}i\theta}f(e^{i\theta})
$$
by modularity, implying that $e^{\frac{2-k}{2}i\theta}f(e^{i\theta})$ is real (see Proposition 2.1 of \cite{Getz}). Since $F$ has real Fourier coefficients by assumption, and the eigenvalues of a Hecke eigenform on $\SL_2(\Z)$ are also real, $H_n^\ast(F;\tau)$ has only real coefficients, and thus we specifically aim to show
\begin{equation} \label{Goal}
	\abs{e^{\frac{2-k}{2}i\theta}e^{-2\pi mn \sin \theta} H_n^\ast(F;e^{i\theta}) - f_{mn}(\theta)} < 2
\end{equation}
for sufficiently large $n$, where, for all $l \geq 1$,
$$
	f_l(\theta) := 2\left( 1 - e^{-4\pi l\sin \theta} e_{k-2}(4\pi l \sin \theta)\right) \cos\left(\frac{k-2}{2}\theta + 2\pi l \cos \theta \right)
$$
and
$$
	e_j(x) := \sum_{n=0}^{j} \frac{x^n}{n!}
$$
is the $j$th order Taylor approximation of $e^x$.

To accomplish this, we first isolate the main term in \eqref{FullExpansion} and write
\begin{equation} \label{MainTerm}
	H_n^\ast(F;e^{i\theta}) = \cF_{2-k,-mn}(e^{i\theta}) + R_n(F;\theta),
\end{equation}
where
$$
	R_n(F;\theta) := \sum_{l=1}^{m} c_F^+(-l) \left(-a(n) \cF_{2-k,-l}(e^{i\theta}) + \sum_{\substack{d \mid (l,n) \\ (d,l) \neq (1,m)}} d^{k-1} \cF_{2-k,-\frac{ln}{d^2}}(e^{i\theta})\right).
$$

To analyze the contributions from each series, we employ \eqref{MaassPoincareDefinition} to write
$$
	\cF_{2-k,-l}(e^{i\theta}) = \sum_{\substack{c \geq 0, d \in \Z \\ (c,d)=1}} (ce^{i\theta} + d)^{k-2} \phi_{-l}\left( \frac{ae^{i\theta} + b}{ce^{i\theta} + d} \right),
$$
where $a,b \in \Z$ is an arbitrary solution to $ad - bc = 1$. We extract the main terms corresponding to $(c,d) = (0,1)$ and $(1,0)$, obtaining
\begin{equation} \label{MainTerms}
	\cF_{2-k,-l}(e^{i\theta}) = \phi_{-l}(e^{i\theta}) + e^{\frac{2-k}{2}i\theta}\phi_{-l}(-e^{-i\theta}) + Q_{2-k,-l}(\theta),
\end{equation}
where
$$
	Q_{2-k,-l}(\theta) := \sum_{\substack{c \geq 1, d \in \Z \setminus\{0\}\\ (c,d)=1}} (ce^{i\theta} + d)^{k-2} \phi_{-l}\left( \frac{ae^{i\theta} + b}{ce^{i\theta} + d} \right).
$$

We rewrite the main terms in \eqref{MainTerms} using the definition of $\phi_{-l}$, which yields
\begin{equation} \label{MainTermsSimp}
	\phi_{-l}(e^{i\theta}) + e^{\frac{2-k}{2}i\theta}\phi_{-l}(-e^{-i\theta}) = \frac{(4\pi l\sin \theta)^{\frac{k-2}{2}}}{\Gamma(k)}M_{\frac{k-2}{2},\frac{k-1}{2}}(4\pi l \sin \theta)\left(e^{2\pi i l \cos \theta} + e^{\frac{2-k}{2}i\theta}e^{-2\pi i l \cos \theta}\right).
\end{equation}
By (13.18.4) and (8.4.7) of \cite{DLMF}, we have, for all $\kappa \in \N$ and $x \in \R$, that
\begin{equation} \label{MSimp}
	M_{\kappa,\kappa+\frac{1}{2}}(x) = (2\kappa + 1)! \cdot \frac{e^{\frac{x}{2}} - e^{-\frac{x}{2}}e_{2\kappa}(x)}{x^{\kappa}}.
\end{equation}
We multiply \eqref{MainTermsSimp} by $e^{\frac{2-k}{2}i\theta}e^{-2\pi l \sin \theta}$ and apply \eqref{MSimp}; this simplifies to $f_l(\theta)$. We thus have that
\begin{equation} \label{QR}
	\abs{e^{\frac{2-k}{2}i\theta}e^{-2\pi mn \sin \theta}H_n^\ast(F;e^{i\theta}) - f_{mn}(\theta)} = e^{-2\pi m\sin \theta}\abs{Q_{2-k,-mn}(\theta) + R_n(F;\theta)}.
\end{equation}

From here, we require a number of lemmas to establish \eqref{Goal}. We first bound the $Q_{2-k,-l}(\theta)$ term.

\begin{lemma} \label{QBound}
	If $l \geq 1$ and $\theta \in [\frac{\pi}{3},\frac{\pi}{2}]$, then we have
	$$
		\abs{Q_{2-k,-l}(\theta)} \leq e^{2\pi l \sin \theta} + 8le^{\pi l \sin \theta} + 1.008 \times 10^{8}l^{k-1}.
	$$
\end{lemma}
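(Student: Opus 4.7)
The plan is to insert the explicit form of $\phi_{-l}$ into the defining series for $Q_{2-k,-l}(\theta)$ and estimate each summand by one of two pointwise bounds, chosen according to the size of $|c\tau+d|$. Using identity \eqref{MSimp} with $\kappa = (k-2)/2$, I first rewrite
$$\phi_{-l}(\tau) = \bigl(e^{2\pi l v} - e^{-2\pi l v} e_{k-2}(4\pi l v)\bigr) e^{-2\pi i l u} = e^{-2\pi l v}e^{-2\pi i l u}\sum_{n=k-1}^{\infty}\frac{(4\pi l v)^n}{n!},$$
which immediately yields both a trivial bound $|\phi_{-l}(\tau)| \leq e^{2\pi l v}$ and a Taylor-remainder bound $|\phi_{-l}(\tau)| \leq \frac{(4\pi l v)^{k-1}}{(k-1)!} e^{2\pi l v}$. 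Since $\mathrm{Im}(M\tau) = \sin\theta/|c\tau+d|^2$, substitution yields two competing bounds on the generic summand $|c\tau+d|^{k-2}|\phi_{-l}(M\tau)|$ that are respectively sharper for small and large $|c\tau+d|$.

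I next partition the coprime pairs $(c, d)$ with $c \geq 1$, $d \neq 0$ by the size of $|c\tau+d|^2 = (c\cos\theta+d)^2 + c^2\sin^2\theta$. Using $\sin^2\theta \geq 3/4$ for $\theta \in [\pi/3, \pi/2]$, direct enumeration shows that the only such pair with $|c\tau+d|^2 \in [1, 2]$ is $(1, -1)$, the only one with $|c\tau+d|^2 \in [2, 3]$ is $(1, 1)$, and every other coprime pair has $|c\tau+d|^2 \geq 3$. These three regions will contribute the three terms of the target bound.

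For $(c,d) = (1, -1)$, I apply the trivial bound; the cancellation in $1 - e^{-4\pi l v'} e_{k-2}(4\pi l v')$ absorbs the $|\tau-1|^{k-2}$ factor and yields a contribution of at most $e^{2\pi l\sin\theta}$, which is tight at $\theta = \pi/3$. For $(c,d) = (1, 1)$, an analogous estimate with $|\tau+1|^2 \in [2, 3]$ and $v' \leq 1/2$ yields $8l e^{\pi l\sin\theta}$, where the factor $8l$ absorbs $|\tau+1|^{k-2}$ growth and the polynomial correction from the truncated series. For the remaining pairs, I apply the Taylor-remainder bound to reduce to
$$\sum_{\substack{|c\tau+d|^2 \geq 3 \\ (c,d)=1,\ c \geq 1,\ d \neq 0}} \frac{(4\pi l\sin\theta)^{k-1}}{(k-1)!\,|c\tau+d|^k}\, e^{2\pi l\sin\theta/|c\tau+d|^2}.$$
Bounding $(4\pi)^{k-1}/(k-1)!$ uniformly in $k$ (e.g. by $e^{4\pi}$), and the exponentially weighted coprime lattice sum by an explicit constant via integral comparison (parameterize by $c \geq 1$ and compare $\sum_d 1/|c\tau+d|^k$ to integrals of $(x^2 + c^2\sin^2\theta)^{-k/2}$), yields the final term $1.008 \times 10^8\, l^{k-1}$.

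The main obstacle is the third region. The exponential $e^{2\pi l\sin\theta/|c\tau+d|^2}$ can be as large as $e^{2\pi l/3}$ near the threshold $|c\tau+d|^2 = 3$, so naive estimates would introduce spurious exponential-in-$l$ factors. The polynomial-in-$l$ bound will survive because the lattice sum, weighted by the prefactor $(4\pi l)^{k-1}/(k-1)!$, is dominated by terms with $|c\tau+d|^2$ of order $l$ or larger where the exponential is $O(1)$; the remaining near-threshold terms can be absorbed into the first two exponential bounds through a secondary decomposition. Quantifying this carefully across all $k \geq 2$ and $l \geq 1$ to produce the explicit numerical constant $1.008 \times 10^8$, together with verifying that the $(1,\pm 1)$ contributions are free of $k$-dependent blowup, is the most intricate part of the argument.
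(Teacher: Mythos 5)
Your two pointwise bounds on $\phi_{-l}$ (the trivial bound $e^{2\pi l v}$ and the Taylor--remainder bound $\frac{(4\pi l v)^{k-1}}{(k-1)!}e^{2\pi l v}$, both read off from \eqref{MSimp}) are exactly the two estimates the paper uses on the Whittaker factor, and the tail is indeed finished with an Epstein-type lattice sum. The difference is in how the coprime pairs are partitioned, and your partition does not close as stated. The paper cuts according to whether $c^2+cd+d^2 < l$ or $\geq l$, i.e.\ according to whether the Whittaker argument $4\pi l\sin\theta/\abs{ce^{i\theta}+d}^2$ exceeds $4\pi$: below the cut the pair $(1,-1)$ gives the $e^{2\pi l\sin\theta}$ term, each of the remaining at most $(\sqrt{3l}+1)^2\le 8l$ pairs is bounded by $e^{\pi l\sin\theta}$ (so the factor $8l$ is a \emph{count of pairs}, not a correction factor for the single pair $(1,1)$), and above the cut the argument is at most $4\pi$, so the Taylor--remainder bound carries only the harmless constant $e^{2\pi}$ and the sum $\sum(c^2+cd+d^2)^{-k/2}$ converges because $k\ge 12$ (not $k\ge 2$ as you write).

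With your fixed thresholds $\abs{c\tau+d}^2\in[1,2]$, $[2,3]$, $\ge 3$, the third region is not $O(l^{k-1})$: already the single pair $(2,-1)$ at $\theta$ near $\pi/3$ has $\abs{c\tau+d}^2=3$, and both of your pointwise bounds for it retain a factor $e^{2\pi l\sin\theta/3}$, which is exponential in $l$ and dwarfs $1.008\times 10^8\, l^{k-1}$ for large $l$. You flag this obstacle yourself and defer it to a ``secondary decomposition,'' but that secondary decomposition \emph{is} the proof --- carrying it out forces the $l$-dependent cut above, at which point you recover the paper's argument. A smaller but real issue: your claimed bound $8le^{\pi l\sin\theta}$ for the pair $(1,1)$ alone is not justified uniformly in $k$. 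The exact contribution of a pair $(c,d)$ is $\frac{(4\pi l\sin\theta)^{(k-2)/2}}{\Gamma(k)}M_{\frac{k-2}{2},\frac{k-1}{2}}\bigl(4\pi l\sin\theta/\abs{ce^{i\theta}+d}^2\bigr)$, and in the exponential regime the $(1,1)$ term is roughly $\abs{e^{i\theta}+1}^{k-2}e^{2\pi l\sin\theta/\abs{e^{i\theta}+1}^2}$; at $\theta=\pi/2$ the prefactor $2^{(k-2)/2}$ is not absorbed by $8l$ once $k$ is large compared to $\log l$, so the ``$8l$ absorbs the $\abs{\tau+1}^{k-2}$ growth'' step would fail. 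Reorganize the estimate around the dichotomy $c^2+cd+d^2<l$ versus $\ge l$ and the stated constants become reachable.
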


\begin{proof}
	By the triangle inequality, we have
	$$
		\abs{Q_{2-k,-l}(\theta)} \leq \sum_{\substack{c \geq 1, d \in \Z \setminus\{0\}\\ (c,d)=1}} \abs{ce^{i\theta} + d}^{k-2} \abs{\phi_{-l}\left( \frac{ae^{i\theta} + b}{ce^{i\theta} + d} \right)}.
	$$
	Substituting the definition of $\phi_{-l}$ yields
	$$
		\sum_{\substack{c \geq 1, d \in \Z \setminus\{0\}\\ (c,d)=1}} \abs{ce^{i\theta} + d}^{k-2} \abs{\phi_{-l}\left( \frac{ae^{i\theta} + b}{ce^{i\theta} + d} \right)} = \frac{(4\pi l \sin \theta)^{\frac{k-2}{2}}}{\Gamma(k)} \sum_{\substack{c \geq 1, d \in \Z \setminus\{0\}\\ (c,d)=1}} M_{\frac{k-2}{2},\frac{k-1}{2}}\left(\frac{4\pi l \sin \theta}{\abs{ce^{i\theta} + d}^2}\right).
	$$
	We have that $M_{\frac{k-2}{2},\frac{k-1}{2}}$ is monotonically increasing on the real line. Furthermore, $\abs{ce^{i\theta}+d}^2 = c^2 + 2cd\cos \theta + d^2 \leq c^2 + cd + d^2$ for $\theta \in [\frac{\pi}{3},\frac{\pi}{2}]$, whereby
	$$
		\sum_{\substack{c \geq 1, d \in \Z \setminus \{0\} \\ (c,d)=1}} M_{\frac{k-2}{2},\frac{k-1}{2}}\left(\frac{4\pi l \sin \theta}{\abs{ce^{i\theta} + d}^2}\right) \leq \sum_{\substack{c \geq 1, d \in \Z \setminus \{0\} \\ (c,d)=1}} M_{\frac{k-2}{2},\frac{k-1}{2}} \left(\frac{4\pi l \sin \theta}{c^2 + cd + d^2}\right).
	$$
	
	We separate the right-hand sum into two parts. If $l \leq c^2 + cd + d^2$, then
	$$
		\frac{4\pi l \sin \theta}{c^2 + cd + d^2} \leq 4\pi \sin \theta \leq 4\pi,
	$$
	so Lemma 2.5 of \cite{GomezOno} gives
	$$
		M_{\frac{k-2}{2},\frac{k-1}{2}}\left(\frac{4\pi l \sin \theta}{c^2 + cd + d^2}\right) \leq e^{2\pi}(4\pi l \sin \theta)^{\frac{k}{2}} \frac{1}{(c^2 + cd + d^2)^{\frac{k}{2}}}.
	$$
	Otherwise, by \eqref{MSimp}, we have $M_{\frac{k-2}{2},\frac{k-1}{2}}(x) \leq (k-1)! \cdot x^{\frac{2-k}{2}}e^{\frac{x}{2}}$. We first have the case of $(c,d) = (1,-1)$, which contributes the term
	\begin{equation} \label{phil}
		\frac{(4\pi l \sin \theta)^{\frac{k-2}{2}}}{\Gamma(k)} M_{\frac{k-2}{2},\frac{k-1}{2}}(4\pi l \sin \theta) \leq e^{2\pi l \sin \theta}.
	\end{equation}
	For all other pairs $(c,d)$, we have that $c^2 + cd + d^2 \geq 2$, so
	$$
		\frac{(4\pi l \sin \theta)^{\frac{k-2}{2}}}{\Gamma(k)} M_{\frac{k-2}{2},\frac{k-1}{2}}\left(\frac{4\pi l \sin \theta}{c^2 + cd + d^2}\right) \leq e^{\pi l \sin \theta}.
	$$

	Therefore, we conclude that
	$$
		\abs{Q_{2-k,-l}(\theta)} \leq e^{2\pi l \sin \theta} + \sum_{\substack{c \geq 1, d \in \Z \\ 1 < c^2 + cd + d^2 < l}} e^{\pi l \sin \theta} + \frac{e^{2\pi}(4\pi l \sin \theta)^{k-1}}{\Gamma(k)}\sum_{\substack{c \geq 1, d \in \Z \\ c^2 + cd + d^2 \geq l}} \frac{1}{(c^2 + cd + d^2)^{\frac{k}{2}}}.
	$$
	Those pairs $(c,d)$ with $1 < c^2 + cd + d^2 < l$ satisfy $\max\{\abs{c},\abs{d}\} < \sqrt{3l}$, so there are at most $(\sqrt{3l} + 1)^2 \leq 8l$ terms in the left-hand sum. Meanwhile, $k \geq 12$ since $F$ is a weak Hecke eigenform, and thus the right-hand sum is bounded above by the Epstein zeta value
	$$
		\sum_{\substack{(c,d) \in \Z \\ (c,d) \neq (0,0)}} \frac{1}{(c^2 + cd + d^2)^{\frac{k}{2}}} \leq \sum_{\substack{(c,d) \in \Z \\ (c,d) \neq (0,0)}} \frac{1}{(c^2 + cd + d^2)^6} \leq 6.0099.
	$$
	Putting this all together, we find that
	$$
		\abs{Q_{2-k,-l}(\theta)} \leq e^{2\pi l \sin \theta} + 8le^{\pi l \sin \theta} + \frac{3219(4\pi l)^{k-1}}{\Gamma(k)}.
	$$
	Then, by Stirling's approximation, we find that
	$$
		\frac{(4\pi)^{k-1}}{\Gamma(k)} \leq 31294
	$$
	for all even $k \geq 12$. Thus,
	$$
		\abs{Q_{2-k,-l}(\theta)} \leq e^{2\pi l \sin \theta} + 8le^{\pi l \sin \theta} + 1.008 \times 10^{8}l^{k-1}
	$$
	and the lemma is proved.
\end{proof}

We then describe the contribution of the main term in \eqref{MainTerm}.
\begin{lemma} \label{PBound}
	For all $l \geq 1$ and $\theta \in [\frac{\pi}{3},\frac{\pi}{2}]$, we have
	$$
		\abs{\cF_{2-k,-l}(e^{i\theta})} \leq 4e^{2\pi l \sin \theta} + 1.008 \times 10^8 l^{k-1}.
	$$
\end{lemma}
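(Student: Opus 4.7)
The plan is to exploit the decomposition \eqref{MainTerms} that was already set up for the proof of Lemma~\ref{QBound}, namely
$$
\cF_{2-k,-l}(e^{i\theta}) = \phi_{-l}(e^{i\theta}) + e^{\frac{2-k}{2}i\theta}\phi_{-l}(-e^{-i\theta}) + Q_{2-k,-l}(\theta),
$$
and to bound each of the two contributions separately. The tail $Q_{2-k,-l}(\theta)$ is already controlled by Lemma~\ref{QBound}, so the work is in bounding the two ``main'' translates $\phi_{-l}(e^{i\theta}) + e^{\frac{2-k}{2}i\theta}\phi_{-l}(-e^{-i\theta})$.

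First, I would apply the closed-form identity \eqref{MainTermsSimp} for those main translates to get
$$
\phi_{-l}(e^{i\theta}) + e^{\frac{2-k}{2}i\theta}\phi_{-l}(-e^{-i\theta}) = \frac{(4\pi l\sin\theta)^{\frac{k-2}{2}}}{\Gamma(k)}\, M_{\frac{k-2}{2},\frac{k-1}{2}}(4\pi l \sin\theta)\left(e^{2\pi i l \cos\theta} + e^{\frac{2-k}{2}i\theta}e^{-2\pi i l \cos\theta}\right).
$$
The factor in parentheses has modulus at most $2$ by the triangle inequality, and the scalar prefactor is bounded by $e^{2\pi l \sin\theta}$, precisely the $(c,d)=(1,-1)$ estimate established in \eqref{phil}. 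Combining these gives
$$
\left|\phi_{-l}(e^{i\theta}) + e^{\frac{2-k}{2}i\theta}\phi_{-l}(-e^{-i\theta})\right| \leq 2e^{2\pi l \sin\theta}.
$$

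Next, I would invoke Lemma~\ref{QBound} to obtain $|Q_{2-k,-l}(\theta)| \leq e^{2\pi l \sin\theta} + 8l\,e^{\pi l \sin\theta} + 1.008\times 10^{8}\,l^{k-1}$. Adding everything gives a preliminary bound of
$$
3e^{2\pi l \sin\theta} + 8l\,e^{\pi l \sin\theta} + 1.008\times 10^{8}\,l^{k-1}.
$$
The only thing remaining is to absorb the $8l\,e^{\pi l \sin\theta}$ term into $e^{2\pi l \sin\theta}$, which is possible because $\theta \in [\pi/3,\pi/2]$ forces $\sin\theta \geq \sqrt{3}/2$. Indeed, for $l\geq 1$ one has $e^{\pi l \sin\theta} \geq e^{\pi\sqrt{3}/2} > 15$, and a one-line monotonicity check shows $8l \leq e^{\pi l \sqrt{3}/2}$ for all $l\geq 1$; hence $8l\,e^{\pi l \sin\theta} \leq e^{2\pi l\sin\theta}$. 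This upgrades the coefficient $3$ to $4$ and yields the claimed bound.

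I do not anticipate a real obstacle: the lemma is essentially a bookkeeping step that packages Lemma~\ref{QBound} together with the modulus bound on the two distinguished coset representatives. The one subtlety is verifying the absorption inequality $8l\,e^{\pi l \sin\theta} \leq e^{2\pi l \sin\theta}$ on the arc $\cA$, but this follows cleanly from $\sin\theta \geq \sqrt{3}/2$ and the fact that the gap $e^{\pi l\sqrt{3}/2}-8l$ is already positive at $l=1$ with derivative of the correct sign.
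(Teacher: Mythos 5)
Your proposal is correct and follows essentially the same route as the paper: decompose via \eqref{MainTerms}, bound the two distinguished translates by $2e^{2\pi l\sin\theta}$ using \eqref{phil}, invoke Lemma~\ref{QBound} for the tail, and absorb $8le^{\pi l\sin\theta}$ into $e^{2\pi l\sin\theta}$. The only cosmetic difference is that you route the main-term bound through the identity \eqref{MainTermsSimp} rather than applying the triangle inequality to the two translates directly, which amounts to the same estimate.
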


\begin{proof}
	We first have, via \eqref{MainTerms} and the triangle inequality,
	$$
		\abs{\cF_{2-k,-l}(e^{i\theta})} \leq \abs{\phi_{-l}(e^{i\theta})} + \abs{\phi_{-l}(-e^{i\theta})} + \abs{Q_{2-k,-l}(\theta)}.
	$$
	The definition of $\phi_{-l}$ gives
	$$
		\abs{\cF_{2-k,-l}(e^{i\theta})} \leq \frac{2(4\pi l \sin \theta)^{\frac{k-2}{2}}}{\Gamma(k)} M_{\frac{k-2}{2},\frac{k-1}{2}}(4\pi l \sin \theta) + \abs{Q_{2-k,-l}(\theta)}.
	$$
	Following \eqref{phil} and applying Lemma \ref{QBound}, we find that
	$$
		\abs{\cF_{2-k,-l}(e^{i\theta})} \leq 3e^{2\pi l \sin \theta} + 8le^{\pi l \sin \theta} + 1.008 \times 10^8 l^{k-1}.
	$$
	We then observe that $8le^{\pi l \sin \theta} \leq e^{2\pi l \sin \theta}$ for all $l \geq 1$, wherein we obtain our desired result.
\end{proof}

Finally, we bound the remaining terms in \eqref{MainTerm}.
\begin{lemma} \label{RBound}
	For all $n \geq 2$ and $\theta \in [\frac{\pi}{3},\frac{\pi}{2}]$, we have
	$$
		\abs{R_n(F;\theta)} \leq\left(  8m^{k-2}n^{\frac{k}{2}}e^{\pi(2m - 1)n\sin \theta} + 2.016 \times 10^8 (mn)^{k-1}\right) \sum_{l=1}^{m } \abs{c_F^+(-l)}
	$$
\end{lemma}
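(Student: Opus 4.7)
The plan is to apply the triangle inequality to $R_n(F;\theta)$, bound each Maass--Poincar\'e summand via Lemma~\ref{PBound}, control the Hecke eigenvalue $|a(n)|$ using Deligne's bound, and then track the ``exponential'' and ``polynomial tail'' contributions separately. First I write
\begin{equation*}
|R_n(F;\theta)| \leq \sum_{l=1}^{m} |c_F^+(-l)| \left( |a(n)| \cdot |\cF_{2-k,-l}(e^{i\theta})| + \sum_{\substack{d \mid (l,n) \\ (d,l) \neq (1,m)}} d^{k-1} |\cF_{2-k,-ln/d^2}(e^{i\theta})| \right).
\end{equation*}
Since $F$ is a weak Hecke eigenform with $k \geq 12$, Deligne's bound on the shadow yields $|a(n)| \leq \sigma_0(n)\, n^{(k-1)/2} \leq 2 n^{k/2}$. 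Substituting the bound $|\cF_{2-k,-L}(e^{i\theta})| \leq 4 e^{2\pi L \sin\theta} + 1.008 \times 10^8 L^{k-1}$ from Lemma~\ref{PBound} splits the right-hand side into two kinds of contributions, which I estimate separately.

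For the exponential part I isolate three sources: the $|a(n)|\cF_{2-k,-l}$ term gives at most $8 n^{k/2} e^{2\pi m \sin\theta}$; the $d = 1$ terms of the inner sum (which by the exclusion $(d,l) \neq (1,m)$ force $l \leq m - 1$) give at most $4 e^{2\pi(m-1)n\sin\theta}$; and the $d \geq 2$ terms give, after the divisor bound $\sum_{d \mid l} d^{k-1} \leq \zeta(k-1)\, l^{k-1} \leq 2 m^{k-1}$, at most $8 m^{k-1} e^{\pi mn \sin\theta /2}$. Since $n \geq 2$ and $\sin\theta \geq \sqrt{3}/2$, all three exponents are dominated by $e^{\pi(2m-1)n\sin\theta}$, with the $d \geq 2$ contribution being exponentially sub-dominant by a factor $e^{-\pi n \sin\theta(3m/2 - 1)}$; combining the prefactors yields $8 m^{k-2} n^{k/2} e^{\pi(2m-1)n\sin\theta}$. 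For the polynomial tails I use
\begin{equation*}
\sum_{d \mid (l,n)} d^{k-1}(ln/d^2)^{k-1} = (ln)^{k-1} \sum_d d^{-(k-1)} \leq \zeta(11)\, (mn)^{k-1}
\end{equation*}
together with $n^{k/2} \leq n^{k-1}/2^{k/2 - 1}$, so that after multiplying by $1.008 \times 10^8$ the total polynomial contribution stays within $2.016 \times 10^8 (mn)^{k-1}$. Factoring $\sum_{l=1}^m |c_F^+(-l)|$ out of the resulting estimate completes the argument.

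The main obstacle is the numerical bookkeeping: one must verify that the $d \geq 2$ contributions, despite carrying a potentially large $m^{k-1}$ divisor factor, are absorbed by their exponential decay below $e^{\pi(2m-1)n\sin\theta}$ for all $m \geq 1$, $n \geq 2$, and $k \geq 12$, and likewise that the several polynomial contributions combine to stay within $2.016 \times 10^8 (mn)^{k-1}$. The extreme case $m = 1$ (where the inner $d$-sum is empty, so only the $|a(n)|\cF_{2-k,-l}$ term survives) is what forces the exact constants; everywhere else there is ample room.
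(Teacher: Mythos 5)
Your proposal is correct and follows essentially the same route as the paper: triangle inequality, the bound from Lemma~\ref{PBound}, Deligne's bound on $a(n)$, and separate tracking of the exponential and polynomial pieces. The only differences are bookkeeping choices (you group the inner sum by $d=1$ versus $d\ge 2$ and invoke $\zeta(k-1)$, where the paper splits on $l=m$ versus $l<m$ and uses $\sigma_0(m)m^{k-3}$-type bounds), and your explicit observation that the $d\ge 2$ terms are exponentially sub-dominant correctly closes the prefactor comparison, including the tight case $m=1$.
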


\begin{proof}
	We have, by the triangle inequality,
	$$
		\abs{R_n(F;\theta)} \leq \sum_{l=1}^{m}  \abs{c_F^+(-l)} \left(\abs{a(n) \cF_{2-k,-l}(e^{i\theta})} + \sum_{\substack{d \mid (l,n) \\ (d,l) \neq (1,m)}} d^{k-1} \abs{\cF_{2-k,-\frac{l n}{d^2}}(e^{i\theta})}\right).
	$$
	First, Lemma \ref{PBound} grants
	\begin{equation} \label{TriR}
		\sum_{\substack{d \mid (l,n) \\ (d,l) \neq (1,m )}} d^{k-1} \abs{\cF_{2-k,-\frac{ln}{d^2}}(e^{i\theta})} \leq 4\sum_{\substack{d \mid (l,n) \\ (d,l) \neq (1,m)}} d^{k-3} e^{2\pi \frac{ln}{d^2}\sin \theta} + 1.008 \times 10^8 (ln)^{k-1} \sum_{\substack{d \mid (l,n) \\ (d,l) \neq (1,m)}} d^{1-k}.
	\end{equation}
	We analyze the righthand sums in two cases. For $l = m$, we have
	$$
		4\sum_{\substack{d \mid (m,n) \\ d > 1}} d^{k-3}e^{2\pi \frac{mn}{d^2}\sin \theta} \leq 4\sigma_0(m)m^{k-3}e^{\pi \frac{mn}{2}\sin \theta} \leq 4\sigma_0(m)m^{k-3} e^{\pi(2m  - 1)n\sin \theta}
	$$
	and
	$$
		1.008 \times 10^8 (mn)^{k-1} \sum_{\substack{d \mid (m,n) \\ d > 1}} d^{1-k} \leq 1.008 \times 10^8 \sigma_0(m)(mn)^{k-1}.
	$$
	Otherwise, $d \leq l \leq m  - \frac{1}{2}$ and we likewise obtain
	$$
		4\sum_{d \mid (l,n)} d^{k-3}e^{2\pi \frac{ln}{d^2}\sin \theta} \leq 4\sigma_0(m)m^{k-3}e^{\pi(2m  - 1)n\sin \theta}.
	$$
	and
	$$
		1.008 \times 10^8(ln)^{k-1}\sum_{d \mid (l,n)} d^{1-k} \leq 1.008 \times 10^8 \sigma_0(m)(mn)^{k-1}.
	$$
	Plugging these bounds into \eqref{TriR}, we have in both cases
	\begin{equation} \label{TriPlug}
		\sum_{\substack{d \mid (l,n) \\ (d,l) \neq (1,m )}} d^{k-1} \abs{\cF_{2-k,-\frac{ln}{d^2}}(e^{i\theta})} \leq 4\sigma_0(m)m^{k-3}e^{\pi(2m  - 1)n\sin \theta} + 1.008 \times 10^8 (mn)^{k-1}\sigma_0(m).
	\end{equation}
	
	Second, Lemma \ref{PBound} and the Deligne bound for coefficients of holomorphic cusp forms \cite{Deligne} yield
	$$
		\abs{a(n)\cF_{2-k,-l}(e^{i\theta})} \leq 4\sigma_0(n)n^{\frac{k-1}{2}}e^{2\pi l \sin \theta}+ 1.008 \times 10^8 \sigma_0(n)l^{k-1}n^{\frac{k-1}{2}}.
	$$
	Since $l \leq m$ and $\sigma_0(n) \leq 2\sqrt{n}$, we have
	$$
		\abs{a(n)\cF_{2-k,-l}(e^{i\theta})} \leq 4n^{\frac{k}{2}}e^{2\pi m \sin \theta}+ 1.008 \times 10^8m^{k-1}n^{\frac{k}{2}}.
	$$
	It is clear that $m^{k-1}n^{\frac{k}{2}} \leq (mn)^{k-1}$ for all $m,n \geq 2$. We then bound $\sigma_0(m)m^{k-3}$ and $n^{\frac{k}{2}}$ by the common bound of $m^{k-2}n^{\frac{k}{2}}$, which, together with \eqref{TriPlug} and the trivial bound $2m \leq (2m - 1)n$, grants us
	$$
		\abs{R_n(F;\theta)} \leq \left(  8m^{k-2}n^{\frac{k}{2}}e^{\pi(2m - 1)n\sin \theta} + 2.016 \times 10^8 (mn)^{k-1}\right) \sum_{l=1}^{m } \abs{c_F^+(-l)}.
	$$
\end{proof}

Altogether, we are now able to show \eqref{Goal}.
\begin{lemma} \label{QRBound}
	If $n \geq 7$ is such that
	$$
		C_F n^{k-1}e^{-\pi n \frac{\sqrt{3}}{2}} < 1,
	$$
	then we have
	$$
		\abs{e^{\frac{2-k}{2}i\theta}e^{-2\pi mn \sin \theta}H_n^\ast(F;e^{i\theta}) - f_{mn}(\theta)} < 2.
	$$
\end{lemma}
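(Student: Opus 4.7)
The plan is to insert the bounds of Lemmas~\ref{QBound} and~\ref{RBound} into \eqref{QR} and verify that the sum of the resulting tail terms is strictly less than $1$. Applying Lemma~\ref{QBound} with $l=mn$, the leading contribution $e^{2\pi mn\sin\theta}$ inside $|Q_{2-k,-mn}(\theta)|$ cancels exactly against the prefactor $e^{-2\pi mn\sin\theta}$ in \eqref{QR}, producing the constant $1$; this cancellation is the source of the ``$2$'' appearing on the right side of the claim. Combining this with Lemma~\ref{RBound} and using the uniform lower bound $\sin\theta\geq\sqrt{3}/2$ on $\mathcal{A}$, the task reduces to verifying
\begin{multline*}
8mn\, e^{-\pi mn\sqrt{3}/2} + 1.008\times 10^8 (mn)^{k-1}e^{-\pi mn\sqrt{3}} \\ + \left(8m^{k-2}n^{k/2}e^{-\pi n\sqrt{3}/2} + 2.016\times 10^8 (mn)^{k-1}e^{-\pi mn\sqrt{3}}\right)\sum_{l=1}^m |c_F^+(-l)| < 1.
\end{multline*}

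The slowest-decaying term above is $8m^{k-2}n^{k/2}e^{-\pi n\sqrt{3}/2}\sum_{l=1}^m |c_F^+(-l)|$. I would use $m^{k-2}n^{k/2}\leq m^{k-1}n^{k-1}$ together with the inequality $80m^{k-1}\sum_l |c_F^+(-l)|\leq 4C_F$ (or, in the branch where $C_F=1/4$ since $80m^{k-1}\sum < 1$, the estimate $n^{k-1}e^{-\pi n\sqrt{3}/2}<4$) to bound this contribution by $\tfrac{2}{5}C_F n^{k-1}e^{-\pi n\sqrt{3}/2} < \tfrac{2}{5}$ via the hypothesis. For the two $(mn)^{k-1}e^{-\pi mn\sqrt{3}}$ tails, the essential step is to exploit the unimodality of the function $g(x):=x^{k-1}e^{-\pi x\sqrt{3}/2}$, which is maximized at $n^\ast := 2(k-1)/(\pi\sqrt{3})$. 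I would show that the hypothesis, together with $n\geq 7$ and $k\geq 12$, actually forces $n>n^\ast$: otherwise $g$ is nondecreasing on $[7,n^\ast]$ and hence $g(n)\geq g(7)=7^{k-1}e^{-7\pi\sqrt{3}/2}\geq 7^{11}e^{-7\pi\sqrt{3}/2}>4\geq 1/C_F$, contradicting the hypothesis. Thus $g$ is decreasing on $[n,\infty)$, so $(mn)^{k-1}e^{-\pi mn\sqrt{3}/2}=g(mn)\leq g(n)<1/C_F$ for every $m\geq 1$; combining with $e^{-\pi mn\sqrt{3}/2}\leq e^{-\pi n\sqrt{3}/2}<1/(C_F n^{k-1})$ (another use of the hypothesis) yields $(mn)^{k-1}e^{-\pi mn\sqrt{3}}<1/(C_F^2 n^{k-1})$. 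Since $C_F^2\geq 1/16$, these tails are small. The last tail $8mn\, e^{-\pi mn\sqrt{3}/2}\leq 56\, e^{-7\pi\sqrt{3}/2}\sim 10^{-7}$ is trivially negligible.

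The main obstacle is this hidden monotonicity condition $n>n^\ast$ extracted from the hypothesis: without it, the $(mn)^{k-1}$-tails cannot be compared to $n^{k-1}$-tails uniformly in $m$, and the reduction to the one-variable hypothesis breaks down. Once this step is carried out, summing the bounded contributions gives a total strictly below $1$, so the original absolute value is strictly less than $2$, which proves the lemma.
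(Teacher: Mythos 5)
Your overall strategy is the paper's: insert Lemmas~\ref{QBound} and~\ref{RBound} into \eqref{QR}, let the leading $e^{2\pi mn\sin\theta}$ from $Q_{2-k,-mn}$ cancel the prefactor to produce the constant $1$, and show the remaining terms sum to less than $1$ using the hypothesis. Your treatment of the main contribution $8m^{k-2}n^{k/2}e^{-\pi n\sqrt{3}/2}\sum_l|c_F^+(-l)|\le\tfrac{2}{5}C_Fn^{k-1}e^{-\pi n\sqrt{3}/2}<\tfrac{2}{5}$ is correct and matches the paper's $10y^{k-1}$ bound. Your explicit unimodality argument for $g(x)=x^{k-1}e^{-\pi x\sqrt{3}/2}$ — showing the hypothesis together with $n\ge 7$, $k\ge 12$ forces $n$ past the maximum of $g$, so that $g(mn)\le g(n)$ — is a genuine improvement in rigor: the paper's own chain $2y^{k-1}e^{-\pi y\sqrt{3}/2}\le \frac{1}{10C_F}n^{k-1}e^{-\pi n\sqrt{3}/2}$ is asserted without justification and needs exactly this comparison between $g(mn)$ and $g(n)$.

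The gap is in your final accounting of the $(mn)^{k-1}e^{-\pi mn\sqrt{3}}$ tails. From $(mn)^{k-1}e^{-\pi mn\sqrt{3}}<\frac{1}{C_F^2n^{k-1}}$ and only $C_F^2\ge\frac{1}{16}$ you get
$$
1.008\times 10^8\,(mn)^{k-1}e^{-\pi mn\sqrt{3}}\;<\;\frac{1.008\times 10^8\cdot 16}{7^{11}}\;\approx\;0.82,
$$
which is not small: added to the $\tfrac{2}{5}$ from the main term the total already exceeds $1$, so the claimed inequality does not follow from your stated bounds. Two repairs are available. First, since $c_F^+(-m)=1$ forces $\sum_{l=1}^m|c_F^+(-l)|\ge 1$ and $m\ge 1$, one actually has $C_F=20m^{k-1}\sum_l|c_F^+(-l)|\ge 20$, so $\frac{1}{C_F^2}\le\frac{1}{400}$ and both tails drop below $10^{-3}$; the bound $C_F\ge\frac{1}{4}$ you used is far from the truth. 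Alternatively, follow the paper's device \eqref{y}: spend only one of the two factors $e^{-\pi mn\sqrt{3}/2}$ against the constant, using $2.016\times 10^8e^{-\pi mn\sqrt{3}/2}\le 2$ for $mn\ge 7$, which reduces each tail to $2\,(mn)^{k-1}e^{-\pi mn\sqrt{3}/2}\le 2g(n)<2/C_F$ before the hypothesis is invoked. Either way the argument closes, but as written the step ``these tails are small'' fails precisely because the budget left after the $\tfrac{2}{5}$ main term is less than $0.6$.
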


\begin{proof}
	By the triangle inequality applied to \eqref{QR}, we have
	$$
		\abs{e^{\frac{2-k}{2}i\theta}e^{-2\pi mn \sin \theta}H_n^\ast(F;e^{i\theta}) - f_{mn}(\theta)} \leq e^{-2\pi mn \sin \theta}\left(\abs{Q_{2-k,-mn}(\theta)} + \abs{R_n(F;\theta)}\right).
	$$
	Inserting the bounds from Lemma \ref{QBound}, we obtain
	\begin{equation} \label{Q}
		e^{-2\pi mn \sin \theta}\abs{Q_{2-k,-m  n}(\theta)} \leq 1 + 8mne^{-\pi m n \frac{\sqrt{3}}{2}} + 1.008 \times 10^{8}(mn)^{k-1}e^{-\pi m n \sqrt{3}}.
	\end{equation}
	Meanwhile, Lemma \ref{RBound} yields
	\begin{equation} \label{R}
		e^{-2\pi mn \sin \theta} \abs{R_n(F;\theta)} \leq \left( 8m^{k-2}n^{\frac{k}{2}} e^{-\pi n \frac{\sqrt{3}}{2}} + 2.016 \times 10^8 (mn)^{k-1} e^{-\pi mn \sqrt{3}} \right) \sum_{l=1}^{m } \abs{c_F^+(-l)}.
	\end{equation}

	Now let $y = mn$. Since $m^{k-1}n^{\frac{k}{2}} \leq (mn)^{k-1}$, we may reformulate the bound in \eqref{R} as
	$$
		8y^{k-1}e^{-\pi n \frac{\sqrt{3}}{2}} + 2.016 \times 10^8 y^{k-1} e^{-\pi y \sqrt{3}} \leq 8y^{k-1}e^{-\pi n \frac{\sqrt{3}}{2}} + 2.016 \times 10^8 y^{k-1} e^{-\pi y \frac{\sqrt{3}}{2}} e^{-\pi n \frac{\sqrt{3}}{2}}.
	$$
	For $y \geq 7$, we have
	\begin{equation} \label{y}
		2.016 \times 10^8 e^{-\pi y \frac{\sqrt{3}}{2}} \leq 2,
	\end{equation}
	whereby
	$$
		8y^{k-1}e^{-\pi n \frac{\sqrt{3}}{2}} + 2.016 \times 10^8 y^{k-1} e^{-\pi y \sqrt{3}} \leq 10y^{k-1}e^{-\pi n \frac{\sqrt{3}}{2}}.
	$$
	Similarly, we may rewrite \eqref{Q} using \eqref{y} as
	$$
		1 + 8ye^{-\pi y \frac{\sqrt{3}}{2}} + 1.008 \times 10^{8}y^{k-1}e^{-\pi y \sqrt{3}} \leq 1 + (y^{k-2} + 8)ye^{-\pi y \frac{\sqrt{3}}{2}} \leq 1 + 2y^{k-1}e^{-\pi y \frac{\sqrt{3}}{2}}.
	$$
	Recalling that
	$$
		C_F := \frac{1}{4}\max \left\{80m^{k-1}\sum_{l=1}^{m} \abs{c_F^+(-l)}, 1\right\},
	$$
	our assumption on $n$ ensures then that
	$$
		10y^{k-1}e^{-\pi n \frac{\sqrt{3}}{2}}\sum_{l=1}^{m } \abs{c_F^+(-l)} = \frac{1}{2}C_Fn^{k-1}e^{-\pi n \frac{\sqrt{3}}{2}} < \frac{1}{2}
	$$
	and
	$$
		2y^{k-1}e^{-\pi y \frac{\sqrt{3}}{2}} \leq \frac{1}{10C_F}n^{k-1}e^{-\pi n \frac{\sqrt{3}}{2}} < \frac{1}{10C_F} \leq \frac{2}{5}
	$$
	since we stipulate that $C_F \geq \frac{1}{4}$. In all, we obtain
	$$
		e^{-2\pi mn \sin \theta}\left(\abs{Q_{2-k,-m  n}(\theta)} + \abs{R_n(F;\theta)}\right) < 1 + \frac{2}{5} + \frac{1}{2} < 2
	$$
	as desired.
\end{proof}

\begin{proof}[Proof of Theorem~\ref{Theorem2}]
	Let $g_{mn}(\theta) := \frac{k-2}{2}\theta + 2\pi mn \cos \theta$, so that
	$$
		f_{mn}(\theta) = 2(1 - e^{-4\pi mn \sin \theta}e_{k-2}(4\pi mn \sin \theta)) \cos(g_{mn}(\theta)).
	$$
	The zeros and extrema of $f_{mn}(\theta)$ are controlled by $g_{mn}(\theta)$. One readily verifies by computation that
	$$
		1 - e^{-4\pi mn \sin \theta}e_{k-2}(4\pi mn \sin \theta) \geq 0.99,
	$$
	for all $k \geq 4$, as $e_{k-2}(x)$ is monotonic in $x$ and $k$. Thus, $\abs{f_{mn}(\theta)} \geq 1.98$ whenever $g_{mn}(\theta)$ is an integer multiple of $\pi$.
	
	We then observe that $g_{mn}(\theta)$ decreases from $\frac{(k-2)\pi}{6} + \pi mn$ to $\frac{(k-2)\pi}{4}$ on $[\frac{\pi}{3},\frac{\pi}{2}]$, hitting $mn + \lfloor (k-2)/6 \rfloor - \lceil (k-2)/4 \rceil$ consecutive integer multiples of $\pi$. Thus, Lemma~\ref{QRBound} implies that $H_n^\ast(F;e^{i\theta})$ changes sign at least this many times in $[\frac{\pi}{3},\frac{\pi}{2}]$, once in each subinterval of the form
	$$
		(g_{mn}^{-1}(\pi (i + 1)),g_{mn}^{-1}(\pi i))
	$$
	for $\lceil (k-2)/4 \rceil \leq i \leq mn + \lfloor (k-2)/6 \rfloor$. Therefore, by the Intermediate Value Theorem, $H_n^\ast(F;e^{i\theta})$ has a zero in each subinterval, and each of these zeros is a zero of $H_n(F;e^{i\theta})$.
	
	The remaining zeros of $H_n(F;e^{i\theta})$ arise from the factor of $\widetilde{E}_{k-2}(e^{i\theta})$. Indeed, we have
	$$
		\lfloor (k-2)/6 \rfloor - \lceil (k-2)/4 \rceil =
		\begin{cases}
			-2 - b(k - 2) & \text{if } k \equiv 0,4 \pmod{12}, \\
			-1 - b(k - 2) & \text{if } k \equiv 6,8,10 \pmod{12}, \\
			-b(k - 2) & \text{if } k \equiv 2 \pmod{12}.
		\end{cases}
	$$
	Likewise, letting $Z_{k-2}$ denote the zeros of $\tilde{E}_{k-2}(e^{i\theta})$ contained in $\cA$, we have
	$$
		Z_{k-2} = \begin{cases}
			\{\frac{\pi}{3},\frac{\pi}{2}\} & \text{if } k \equiv 0,4 \pmod{12}, \\
			\{\frac{\pi}{2}\} & \text{if } k \equiv 8 \pmod{12}, \\
			\{\frac{\pi}{3}\} & \text{if } k \equiv 6,10 \pmod{12}, \\
			\es & \text{if } k \equiv 2 \pmod{12}.
		\end{cases}
	$$
	In all cases, we see that $mn + \lfloor (k-2)/6 \rfloor - \lceil (k-2)/4 \rceil + \abs{Z_{k-2}} = mn - b(k - 2) = \deg P_n(F;x)$. Since $j \colon \cA \to [0, 1728]$ is a bijection, we have located all $mn - b(k - 2)$ zeros of $P_n(F;x)$, which are simple and in the interval $[0, 1728]$.
\end{proof}

\end{document}